\newtheorem{lemma}{Lemma}
\newtheorem{proposition}{Proposition}
\newtheorem{theorem}{Theorem}
\def \vol{\operatorname {vol}}
\def \w{\operatorname w}
\def \sg{\operatorname{sg}}
\renewcommand{\div}{\operatorname{div}}
\newcommand \affplap[1][p]{\Delta_{#1}^{\mathcal A}}
\newcommand \gnorm[2][p] {\| | \nabla #2 | \|_{#1}}
\def \suchthat {:\ }
\def \Lp {L_p}
\def \esssup {\operatorname{ess sup}}
\def \constE {c_{n,p}} %en \mathcal_pE, la constante que multiplica a la integral en S^{n-1} fuera de la potencia -1/n
\def \constRZ {C_{n,p}(\domain)} %la constante de la nueva desigualdad Reverse-Zhang, que multiplica a las dos normas
\def \constAbsRZ{d_{n,p}} %la constante de la nueva desigualdad Reverse-Zhang, que aparece en la constRZ (no depende del dominio)
\def \constAuxRZ {a_{n,p}} %la constante que aparece en la cuenta de RZ, cuando se compara max_\xi \|\nabla_\xi f\|_p^p > \constAuxRZ \gnorm f
\def \constCentroidPol{r_{n,p}}% en el centroid body, en coordenadas polares, la constante que multiplica dentro de la potencia 1/p, en el denominador
\def \constCentroid{b_{n,p}}% en el centroid body, la constante que multiplica dentro de la potencia 1/p, en el denominador
\def \constchino {m_{n,p}} %la constante del chino que compara \E f con min_sl
\def \constTal {t_{p}} %la constante de la Poincare 1 dimensional de Talenti
\newcommand \vball[1] {\omega_{#1}}
\newcommand \affineeigenvalue[1][\domain] {\lambda^{\mathcal A}_{1,p}(#1)} %affine eigenvalue
\newcommand \affineeigenone[1][\domain] {\lambda^{\mathcal A}_{1,1}(#1)} %affine eigenvalue with p=1
\newcommand \classicaleigenvalue[1][\domain] {\lambda_{1,p}(#1)} %classical eigenvalue
\newcommand \classicaleigenone[1][\domain] {\lambda_{1,1}(#1)} %classical eigenvalue
\def \domain {\Omega}
\newcommand \boundary[1] {\partial #1}
\newcommand \W[1][\domain] {W^{1,p}_0(#1)}
\newcommand \BV[1][\domain] {\operatorname{BV}(#1)}
\newcommand \E[1][p] {\mathcal E_{#1}}
\def \R {\mathbb R}
\def \Rn {\mathbb R^n}
\def \S {{{\mathbb S}^{n-1}}}
\def \B {\mathbb B}
\renewcommand \L[1][f] {L_{p,#1}}
\def \sl{\operatorname{SL_n(\R)}}
\def \gl{\operatorname{GL_n(\R)}}
\def \PZ {Polya-Szeg\"o\xspace}
\def \FK {Faber-Krahn\xspace}
\def \BZ {Brothers-Ziemer\xspace}
\def \SZ {Sobolev-Zhang\xspace}
\def \RK {Rellich-Kondrachov\xspace}
\def \PP {Petty projection\xspace}
\def \BP {Busemann-Petty\xspace}
\def \Holder {H\"older\xspace}
\def \Poincare {Poincar\'e\xspace}
\def \EL {Euler-Lagrange\xspace}
\def \BS {Blaschke-Santal\'o\xspace}
\author{J. Haddad}
\address{Juli\'an Haddad: Departamento de Matem\'atica, ICEx,  Universidade Federal de Minas Gerais, 30.12370, Belo Horizonte, Brasil.}
\email{jhaddad@mat.ufmg.br}
\author{C. H. Jim\'enez}
\address{Carlos Hugo Jim\'enez:Departamento de Matem\'atica, Pontif\'icia Universidade Cat\'olica do Rio de Janeiro}
\email{hugojimenez@mat.puc-rio.br}
\author{M. Montenegro}
\address{Marcos Montenegro: Departamento de Matem\'atica, ICEx,  Universidade Federal de Minas Gerais, 30.123970, Belo Horizonte, Brasil.}
\email{montene@mat.ufmg.br}
\begin{document}
\title{From affine Poincar\'e inequalities to affine spectral inequalities}
\subjclass{Primary 35P30; Secondary 52A40, 35B09}
\date{}
\begin{abstract}
Given a bounded open subset $\domain$ of $\R^n$, we establish the weak closure of the affine ball $B^{\mathcal A}_p(\domain) = \{f \in \W:\ \E f \leq 1\}$ with respect to the affine functional $\E f$ introduced by Lutwak, Yang and Zhang in \cite{lutwak2002sharp} as well as its compactness in $L^p(\domain)$ for any $p \geq 1$. These points use strongly the celebrated Blaschke-Santal\'{o} inequality. As counterpart, we develop the basic theory of $p$-Rayleigh quotients in bounded domains, in the affine case, for $p\geq 1$. More specifically, we establish $p$-affine versions of the \Poincare inequality and some of their consequences. We introduce the affine invariant $p$-Laplace operator $\affplap f$ defining the \EL equation of the minimization problem of the $p$-affine Rayleigh quotient. We also study its first eigenvalue $\affineeigenvalue$ which satisfies the corresponding affine \FK inequality, this is that $\affineeigenvalue$ is minimized (among sets of equal volume) only when $\domain$ is an ellipsoid. This point depends fundamentally on PDEs regularity analysis aimed at the operator $\affplap f$. We also present some comparisons between affine and classical eigenvalues, including a result of rigidity through the characterization of equality cases for $p \geq 1$. All affine inequalities obtained are stronger and directly imply the classical ones.
	%Lastly, for $p=1$ and $\domain$ convex we find a sufficient condition of spectral type for the domain $\domain$ to be in John's position. All affine inequalities obtained are stronger and directly imply the classical ones.	

%We prove compactness properties of the affine functional $\E f$ defined by Lutwak, Yang, Zhang \cite{lutwak2002sharp} and as consequence we develop the basic theory of $p$-Rayleight quotients in bounded domains, in the affine case, for $p\geq 1$. More specifically we obtain $p$-affine versions of the \Poincare inequality and the \RK immersion theorem. We define an affine invariant $p$-Laplace operator $\affplap f$ defining the \EL equation of the minimization problem of the $p$-affine Rayleight quotient. We also study its first eigenvalue $\affineeigenvalue$ which satisfies the corresponding affine \FK inequality, this is that $\affineeigenvalue$ is minimized (among sets of equal volume) when $\domain$ is an ellipsoid. All the affine inequalities obtained are stronger and directly imply the classical ones.
%For $p=1$ and $\domain$ convex we find a sufficient condition of spectral type, for the domain $\domain$ to be in John's position.
\keywords{\Poincare inequalities, \FK inequalities, affine-invariant, affine $p$-Laplacian}
\end{abstract}
\maketitle
\sloppy

\section{Introduction}
Sharp functional inequalities are among the fundamental tools in the developing of mathematics with applications in various branches of science. One of them is the classical $L^p$ \Poincare inequality on bounded open sets $\domain \subset \R^n$, which has its origin in the seminal work of \Poincare \cite{poincare}, and states, for any $p \geq 1$ and $f \in C_0^\infty(\domain)$, that

\begin{equation}
	\label{ineq_classicalpoincare}
D_0(\domain) \int_\domain |f|^p dx \leq \int_\domain |\nabla f|^p dx,
\end{equation}
where $D_0(\domain)$ is a positive constant depending only on the open set $\domain$. The inequality \eqref{ineq_classicalpoincare} naturally extends to the completion $\W$ of the space $C_0^\infty(\domain)$ of smooth functions compactly supported in $\domain$, with respect to the norm
\[
\|f\|_{\W} := \left( \int_\domain |\nabla f|^p dx \right)^{1/p}.
\]

Consider the optimal constant related to \eqref{ineq_classicalpoincare} given by

\begin{equation}
	\label{def_classicaleigenvalue}
\classicaleigenvalue := \inf_{f \in \W \setminus \{0\}} R_p(f),
\end{equation}
where $R_p(f)$ denotes the Rayleigh $p$-quotient:

\[
R_p(f) := \frac{\int_\domain |\nabla f|^p dx}{\int_\domain |f|^p dx}.
\]
For $p > 1$, the fact that $\W$ is a reflexive Banach space with respect to the norm $\gnorm f$ and the compactness of the embedding $\W \hookrightarrow L^p(\domain)$ ensure that the infimum $\classicaleigenvalue$ is attained by a function $f_p \in \W$. Moreover, $\classicaleigenvalue$ is the first eigenvalue of the $p$-Laplace (or $p$-Laplacian) operator
\[
\Delta_p f := - {\div}(|\nabla f|^{p-2} \nabla f)
\]
and, consequently, $f_p$ is a bounded first eigenfunction, which can be assumed positive, in $C^{1,\alpha}(\domain)$ if $\domain$ is non-smooth and in $C^{1,\alpha}(\overline{\domain})$ if $\domain$ is smooth. We refer for example to \cite{de1960sulla}, \cite{giaquinta1984quasi} and \cite{di1984harnack} for boundedness and local $C^\alpha$ regularity within the quasi-minima theory in calculus of variations, to \cite{dibenedetto1982c}, \cite{ladyzhenskaya1968linear}, \cite{tolksdorf1984regularity} and \cite{uhlenbeck1977regularity} for the boundedness and local $C^\alpha$ and $C^{1,\alpha}$ regularity and to \cite{tolksdorf1984regularity} and \cite{lieberman1988boundary} for global $C^{1,\alpha}$ regularity within the theory of quasilinear elliptic equations in divergence form.     In particular, in latter, one deduces that $f_p$ is positive in $\domain$ (e.g. \cite{pucci2004strong,pucci2007j}) and unique, up to a multiplicative constant (e.g. \cite{lindqvist1990equation}). For $p = 1$, $\lambda_{1,1}(\domain)$ is not attained by any function in $W^{1,1}_0(\domain)$, but by a function $f_1 \in BV(\domain)$, see \cite{kawohl2007dirichlet} and references therein.

Some fundamental questions in mathematics, part of them originated in physics, are formulated in terms of bounds of eigenvalues associated to certain differential operators (particularly the Laplace operator) in an area known as spectral geometry. For a complete overview on problems of great interest on this subject, we refer to the classical monographs \cite{bandle1980isoperimetric}, \cite{osserman1978isoperimetric}, \cite{payne1967isoperimetric} and to the excellent recent surveys \cite{ashbaugh2007isoperimetric} and \cite{henrot2006extremum}.

One of the most famous questions in spectral geometry was posed in 1887 by Rayleigh in the book \cite{strutt1945theory} entitled {\it The theory of sound}. In occasion, he conjectured that among all membranes (open sets $\domain \subset \R^2$) of same area, the disk minimizes the corresponding principal frequencies of sounds (eigenvalues $\lambda_{1,2}(\domain)$). The conjecture was proved in the 1920s, independently, by Faber \cite{faber1923beweis} and Krahn \cite{krahn1925rayleigh} for arbitrary dimensions, namely they established for any $n \geq 2$ the celebrated \FK isoperimetric inequality which states that

\begin{equation}
	\label{ineq_classicalFKp1}
\lambda_{1,2}(\domain) \geq \lambda_{1,2}(\B)
\end{equation}
for every open set $\domain$ in $\R^n$ having the same measure of a fixed ball $\B$. Moreover, equality holds if, and only if, $\domain$ is a ball.

Once the value of $\lambda_{1,2}(\B)$ is explicitly known, inequality \eqref{ineq_classicalFKp1} can be rephrased as (e.g. \cite{krahn1926minimaleigenschaften})

\begin{equation}
|\domain|^{2/n} \lambda_{1,2}(\domain) \geq \frac{\pi j^2_{(n-2)/2}}{\Gamma^{2/n}((n+2)/2)},
\end{equation}
where $j_{(n-2)/2,1}$ denotes the first positive zero of the Bessel function $J_{(n-2)/2}(x)$. The original proof of the \FK inequality makes use of Schwarz symmetrization (spherically symmetric decreasing rearrangement). For different proofs of the \FK inequality we mention \cite{bucur2012new} and \cite{kesavan1988some}.

Other \FK inequalities associated to more general elliptic operators have also been considered, particularly to the $p$-Laplace operator. In this case, it has been proved by Alvino, Ferone and Trombetti \cite{alvino1998properties} for $p > 1$ (see also \cite{bhattacharya1999proof} and \cite{matei2000first}) and by Fusco, Maggi and Pratelli \cite{fusco2009stability} for $p = 1$ that the $p$-\FK inequality
\begin{equation} \label{ineq_classicalFK}
\classicaleigenvalue \geq \classicaleigenvalue[\B]
\end{equation}
holds for every open set $\domain$ in $\R^n$ having the same measure as a fixed ball $\B$. Moreover, equality holds if, and only if, $\domain$ is a ball. The major difficulty is ensuring that the equality occurs only on balls. This one is surrounded for $p > 1$ thanks to the celebrated theorem of Brothers and Ziemer of \cite{Brothers1988} (see page 154) on the characterization of equality in the \PZ principle, and for $p = 1$ is used that $\classicaleigenvalue$ converges to the Cheeger constant $h_1(\domain)$ as $p \rightarrow 1^+$ as shown by Kawohl and Fridman \cite{kawohl2003isoperimetric} and that a suitable quantitative form of the Cheeger isoperimetric inequality occurs (see \cite{fusco2009stability} on page 57).

Two important landmarks in the modern theory of sharp functional inequalities are the bedrock works due to Zhang \cite{zhang1999affine} and Lutwak, Yang and Zhang \cite{lutwak2002sharp} connecting areas as Analysis and Convex Geometry. Indeed, for $p \geq 1$, let
\begin{equation}
	\label{def_affineterm}
	\E f = \constE \left(\int_{\S} \|\nabla_\xi f(x)\|_p^{-n}d\xi\right)^{-\frac 1n}
\end{equation}
with
\[
\constE = \left(n \vball n\right)^{\frac 1n} \left(\frac{n \vball n \vball {p-1}}{2 \vball {n+p-2}}\right)^{\frac 1p},
\]
where $\nabla_\xi f(x) = \nabla f(x) \cdot \xi$ and $\vball k$ denotes the volume of the unit ball in $\R^k$. Denote by $W^{1,p}(\R^n)$ the space of weakly differentiable functions in $\R^n$ endowed with the $L^p$ gradient norm.

Let $1 \leq p < n$. The sharp affine $L^p$ Sobolev inequality, proved in \cite{lutwak2002sharp} for $1 < p < n$ and in \cite{zhang1999affine} for $p = 1$, states that for any $f \in W^{1,p}(\R^n)$
\begin{equation}
	\label{ineq_Zhang}
\|f\|_{\frac{n p}{n-p} } \leq S_{n,p} \E f.
\end{equation}
Moreover, equality holds for $p>1$ if, and only if,
\[
f(x) = a\left(1 + b|A(x - x_0)|^\frac p{p-1}\right)^{1-\frac pn}
\]
for some $a \in \R$, $b > 0$, $x_0 \in \R^n$ and $A \in \gl$, where $\gl$ denotes the set of invertible $n \times n$-matrices.
For $p=1$, equality is attained for multiples of characteristic functions of ellipsoids, which belong to the larger space $\BV$, the space of functions of bounded variation.

For more references on optimal affine functional inequalities we quote \cite{haddad2016sharp, cianchi2009affine, haberl2009asymmetric, ludwig2011sharp, lutwak2006optimal, wang2012affine, de2018sharp, haddad2019sharp}.
We will refer to inequality \eqref{ineq_Zhang} as the \SZ inequality.

When one restricts \eqref{ineq_Zhang} to functions in the set $\W := \{f \in H^{1,p}(\R^n):\, f = 0 {\rm\ on\ }\, \R^n \setminus \domain\}$ and makes use of \Holder's inequality, one easily obtains the following inequality for any $f \in \W$:

\begin{equation}
	\label{ineq_affinepoincare}
K(p,\domain) \int_\domain |f|^p dx \leq \E^pf,
\end{equation}
where $K(p,\domain)$ is a positive constant depending only on the parameter $p$ and the bounded open set $\domain$. In other words, as a consequence of the sharp affine Sobolev inequality \eqref{ineq_Zhang}, we deduce that the affine \Poincare inequality holds on $\W$ for any $1 \leq p < n$.

A first question then arises:

\begin{center}
Let $f \in L^p(\domain)$ be a weakly differentiable function such that $f = 0$ on $\R^n \setminus \domain$.\\ Does $\E f < \infty$ imply $\gnorm f < \infty$?
\end{center}
As is well known, the reciprocal is true since the inequality $\E f \leq \gnorm{f}$ always holds for any $p \geq 1$, see for example \cite{lutwak2002sharp}. The above query will be affirmatively answered in Section 2 for general bounded open sets by means of a type of reverse inequality, so that $\E f < \infty$ if, and only if, $\gnorm f < \infty$ for any $p \geq 1$. Consequently, $\W$ is the adequate space for dealing with affine \Poincare inequalities, and so we introduce for each $p \geq 1$:

\begin{equation}
	\label{def_affineeigenvalue}
\affineeigenvalue := \inf_{f \in \W \setminus \{0\}} R^{{\mathcal A}}_p(f),
\end{equation}
where $R^{{\mathcal A}}_p(f)$ denotes the affine Rayleigh $p$-quotient:

\[
R^{{\mathcal A}}_p(f) := \frac{\E^pf}{\|f\|^p_{L^p(\domain)}}.
\]
It deserves to be noticed that $R^{{\mathcal A}}_p(f)$ and $\affineeigenvalue$ are affine invariants with respect to volume preserving affine transformations.
As counterparts of the above definition, a number of interface questions emerge connecting Analysis, Convex Geometry and Spectral Geometry.

The present paper focuses on the following issues:

\begin{itemize}

\item[(A)] Is the number $\affineeigenvalue$ positive for any $p \geq 1$? \\ Notice that the positivity of $\affineeigenvalue$ means that the affine $L^p$ \Poincare inequality \eqref{ineq_affinepoincare} holds on bounded open sets, as it is known only for $p \leq n$.

\item[(B)] Is the inclusion $B^{\mathcal A}_p(\domain):= \{f \in \W:\ \E f \leq 1\} \hookrightarrow L^p(\domain)$ compact for any $p \geq 1$?\\ The compactness is not clear since the set $B^{\mathcal A}_p(\domain)$ is not bounded in $\W$. A counter-example of the latter is also provided.

\item[(C)] Is the infimum $\affineeigenvalue$ attained for some function $f^{{\mathcal A}}_p \in \W$ for any $p \geq 1$?\\ The attainability of $\affineeigenvalue$ is by far not direct since the Zhang's term $\E f$ is not a convex functional.

\item[(D)] Is there any analytical bridge connecting $\affineeigenvalue$ to the spectrum of some differential operator in case $p > 1$?

\item[(E)] Are all minimizers of $\affineeigenvalue$ smooth for $p > 1$? Is there any characterization of them?\\ The questions (D) and (E) link the best constants of affine \Poincare inequalities to PDEs theory.

\item[(F)] Does the affine \FK inequality hold for any $p \geq 1$?

\item[(G)] If so, is it possible to characterize all cases of equality?\\ The questions (F) and (G) link Geometry Convex and Spectral Geometry. The first one consists in finding an optimal bounded open subset $\mathbb E$ of $\R^n$ in the sense that the affine $p$-\FK inequality
\begin{equation}
	\label{ineq_affineFK}
	\affineeigenvalue \geq \affineeigenvalue[\mathbb E]
\end{equation}
holds for every bounded open subset $\domain \subset \R^n$ with the same Lebesgue measure as $\mathbb E$ and the second one in characterizing all cases of equality in \eqref{ineq_affineFK}.

\item[(H)] How far apart are the best constants $\affineeigenvalue$ and $\classicaleigenvalue$?\\ The idea is to compare these two numbers in terms of geometric properties of the domain $\domain$ and to establish properties of rigidity.

\end{itemize}

The central goal here is to provide answers to the raised questions from (A) to (H).

The first three theorems give complete answers to (A), (B) and (C).

\begin{theorem}
	\label{thm_affinePI}
Let $\domain \subset \R^n$ be a bounded open set and $p \geq 1$. Then the affine $L^p$ \Poincare inequality \eqref{ineq_affinepoincare} holds for every function $f \in \W$.
\end{theorem}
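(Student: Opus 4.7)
The plan is to bypass the \SZ route (which only yields \eqref{ineq_affinepoincare} for $p<n$) and to exploit a uniform one-directional \Poincare estimate that is available precisely because $\domain$ is bounded. Since $\E f$ is, up to the constant $\constE$, a normalized $L^{-n}$ average over $\xi\in\S$ of the directional $L^p$ gradient norms $\|\nabla_\xi f\|_{L^p(\domain)}$, a \emph{uniform-in-$\xi$} lower bound for these norms translates at once into a lower bound for $\E f$, without ever needing convexity of the functional or a reverse inequality against $\gnorm f$.

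Fix $R>0$ with $\domain\subset B_R$ and let $f\in C_0^\infty(\domain)$. For any $\xi\in \S$ slice $\R^n$ by the affine lines parallel to $\xi$, writing $x=y+s\xi$ with $y\in\xi^\perp$. Because $s\mapsto f(y+s\xi)$ is compactly supported in an interval of length at most $2R$, the fundamental theorem of calculus and \Holder's inequality give
\[
|f(y+s\xi)|^p \;\leq\; (2R)^{p-1}\int_\R |\nabla_\xi f(y+t\xi)|^p\,dt .
\]
Integrating in $s$ over the support (of length $\leq 2R$) and then in $y\in\xi^\perp$ via Fubini produces the uniform one-directional estimate
\[
\|f\|_{L^p(\domain)}\;\leq\; 2R\,\|\nabla_\xi f\|_{L^p(\domain)}\qquad\text{for every }\xi\in\S.
\]

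Raising the previous inequality to the $-n$ power and integrating over $\S$ (of surface measure $n\vball n$) gives
\[
\int_\S \|\nabla_\xi f\|_{L^p(\domain)}^{-n}\,d\xi \;\leq\; n\vball n\,(2R)^n\,\|f\|_{L^p(\domain)}^{-n},
\]
so that \eqref{def_affineterm} immediately yields $\E f\geq \constE (n\vball n)^{-1/n}(2R)^{-1}\|f\|_{L^p(\domain)}$. Taking $p$-th powers is exactly \eqref{ineq_affinepoincare} with constant $K(p,\domain)=\constE^{\,p}(n\vball n)^{-p/n}(2R)^{-p}$, and a density argument extends the inequality from $C_0^\infty(\domain)$ to $\W$. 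I do not expect a genuine obstruction: the only mild subtlety is $p=1$, but because the argument is linear in $f$ it carries over verbatim to $W^{1,1}_0(\domain)$ and, by the usual strict approximation, to $\BV$ if needed. Crucially, the proof uses no Sobolev-type inequality at all, so the range $p\geq n$, previously out of reach via \SZ combined with \Holder, is covered in the same breath as $1\leq p<n$.
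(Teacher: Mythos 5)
Your proof is correct, and it takes a genuinely more direct route than the paper. The paper obtains Theorem \ref{thm_affinePI} as a corollary of the stronger reverse comparison inequality of Theorem \ref{thm_reversezhang}, $\E f \geq \constRZ \|f\|_p^{(n-1)/n}\gnorm{f}^{1/n}$, whose proof combines the same directional ingredient you use (Lemma \ref{lem_pbound}: the one-dimensional \Poincare estimate $\|\nabla_\xi f\|_p \geq \constTal \|f\|_p \w(\domain,\xi)^{-1}$, uniform in $\xi$, with Talenti's sharp constant and the width of $\domain$) with a cone-volume estimate for the convex body $\L^\circ$ and the \BS inequality; the affine \Poincare inequality is then deduced by inserting the classical inequality \eqref{ineq_classicalpoincare}. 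You instead feed your uniform directional bound $\|f\|_p \leq 2R\,\|\nabla_\xi f\|_p$ (with $\domain$ contained in a ball of radius $R$) directly into the definition \eqref{def_affineterm}: raising it to the power $-n$ and integrating over $\S$ gives
\[
\int_\S \|\nabla_\xi f\|_p^{-n}\, d\xi \leq n\vball n (2R)^n \|f\|_p^{-n},
\qquad\text{hence}\qquad
\E f \geq \constE (n\vball n)^{-1/n}(2R)^{-1}\|f\|_p,
\]
which is \eqref{ineq_affinepoincare} with an explicit constant, valid for all $p\geq 1$ exactly as claimed. What your argument buys is elementarity: no \BS inequality, no convex body $\L^\circ$, no appeal to the classical \Poincare inequality; moreover the extension to $\W$ is airtight because $\bigl|\,\|\nabla_\xi f_k\|_p - \|\nabla_\xi f\|_p\,\bigr| \leq \gnorm{f_k-f}$ uniformly in $\xi$, so the per-direction estimate passes to the limit along a $C_0^\infty$ approximating sequence. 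What it gives up is the stronger output: the paper's detour through \BS yields the reverse inequality involving $\gnorm{f}^{1/n}$, which is the workhorse for the compactness and existence results (Theorems \ref{thm_affineRK} and \ref{thm_existence}) and for the comparisons in Theorem \ref{thm_Lambdaproperties}; your bound, depending only on the circumradius and not on $\gnorm{f}$, cannot replace it there. The constants also differ (you use the diameter bound $2R$ where the paper keeps the direction-dependent width and Talenti's sharp one-dimensional constant), but since neither constant is claimed optimal this is cosmetic.
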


\begin{theorem}
	\label{thm_affineRK}
Let $\domain \subset \R^n$ be a bounded open set and $p \geq 1$. Then the set $B^{\mathcal A}_p(\domain)$ is compactly immersed into $L^p(\domain)$ and is unbounded in $\W$.
\end{theorem}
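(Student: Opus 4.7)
The plan addresses the two assertions separately, with compactness being the substantive point. For the unboundedness of $B^{\mathcal A}_p(\domain)$ in $\W$, we exploit the $\sl$-invariance of $\E$ against the non-invariance of $\|\nabla\cdot\|_p$: fix a smooth bump $\phi\in C_c^\infty(\R^n)$ and consider the anisotropic rescalings $f_\epsilon(x)=\epsilon^{a}\phi(x_1,x_2/\epsilon,\ldots,x_n/\epsilon)$. A computation that splits the spherical integral $\int_{\S}\|\nabla_\xi f_\epsilon\|_p^{-n}d\xi$ into the polar-cap region $|\xi'|\lesssim\epsilon$ and its complement shows that with the choice $a=(n-1)(p-n)/(np)$ one has $\E f_\epsilon$ bounded in $\epsilon$ while $\|\nabla f_\epsilon\|_p\sim\epsilon^{-1/n}\to\infty$. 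For $\epsilon$ small the support of $f_\epsilon$ lies inside $\domain$ after translation, producing a sequence in $B^{\mathcal A}_p(\domain)$ unbounded in $\W$.

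For the compactness, the backbone is an $\sl$-normalisation lemma whose proof invokes the \BS inequality applied to the Lutwak-Yang-Zhang $L^p$-projection body of $f$: there is a constant $C_{n,p}>0$ such that every $f\in\W$ admits some $T\in\sl$ with
\[
\|\nabla(f\circ T)\|_p\le C_{n,p}\,\E f.
\]
Indeed, $\E f$ is essentially the inverse $n$-th root of the volume of the polar of the $L^p$-projection body of $f$, while $\|\nabla f\|_p^p$ is a $p$-th mean of its support function; \BS bounds the second by the first once $T$ brings the body into John (or isotropic) position.

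Granted this lemma, let $\{f_k\}\subset B^{\mathcal A}_p(\domain)$, choose $T_k\in\sl$ as above, and set $g_k=f_k\circ T_k$, so $g_k\in\W[T_k^{-1}\domain]$ with $\|\nabla g_k\|_p\le C_{n,p}$ and, by Theorem~\ref{thm_affinePI}, $\|g_k\|_p=\|f_k\|_p\le C$. Passing to a subsequence we split two cases. In case (I) the $T_k$ stay bounded in $\sl$; since $\det T_k=1$, the inverses are bounded too, so extraction gives $T_k\to T\in\sl$, the supports $T_k^{-1}\overline{\domain}$ lie in a common ball $B_R$, the classical Rellich-Kondrachov theorem in $W^{1,p}_0(B_R)$ yields $g_k\to g$ in $L^p(B_R)$ along a subsequence, and a change of variables together with uniform approximation of $g$ by continuous functions gives $f_k=g_k\circ T_k^{-1}\to g\circ T^{-1}$ in $L^p(\domain)$. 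In case (II), $\|T_k^{-1}\|_{\mathrm{op}}\to\infty$; by the singular-value decomposition (and $\det T_k^{-1}=1$) the smallest singular value $\sigma_n(T_k^{-1})\to 0$, so $T_k^{-1}\overline{\domain}$ fits inside a slab of width $\sigma_n(T_k^{-1})\,\mathrm{diam}(\domain)\to 0$ in the corresponding direction, and the one-dimensional Poincar\'e inequality in that slab yields $\|g_k\|_p\le 2\,\sigma_n(T_k^{-1})\,\mathrm{diam}(\domain)\,\|\nabla g_k\|_p\to 0$, hence $\|f_k\|_p\to 0$ in $L^p(\domain)$.

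The main obstacle is the normalisation lemma: the case analysis reduces to Rellich-Kondrachov plus a slab Poincar\'e estimate, but extracting the uniform bound from \BS on the $L^p$-projection body is where the essential convex-geometric input enters, exactly as the authors flag in the abstract.
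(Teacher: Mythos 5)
Your argument is correct, but it takes a genuinely different route from the paper's. For the compactness, the paper first proves a domain-dependent reverse inequality (Theorem \ref{thm_reversezhang}), $\E f \geq \constRZ \|f\|_p^{(n-1)/n}\gnorm{f}^{1/n}$, obtained from the \BS inequality via a double-cone volume bound for $\L^\circ$, and then runs the dichotomy on the functions themselves: either $\|f_k\|_p\to 0$ along a subsequence, or $\|f_k\|_p\geq c>0$, in which case the reverse inequality bounds $(f_k)$ in $\W$ and the classical \RK theorem applies. You instead normalize by $\sl$: your lemma $\min_{T\in\sl}\|\nabla (f\circ T)\|_p\leq C_{n,p}\,\E f$ is exactly the Huang--Li inequality that the paper quotes later as Proposition \ref{thm_chino}, so you could simply cite it; your sketch of a proof via \BS together with the John position of the origin-symmetric body $\L^\circ$ does work, since in that position the $p$-mean of the support function is controlled by $\vol(\L^\circ)^{1/n}$, which \BS bounds by a constant times $\E f$. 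Your dichotomy is then on the normalizing matrices: bounded $T_k$ reduces to classical \RK plus continuity of composition in $L^p$ (correct, including the bound $\|T_k^{-1}\|\leq \|T_k\|^{n-1}$ from $\det T_k=1$), while degenerating $T_k$ squeezes $T_k^{-1}\domain$ into slabs of vanishing width, where the one-dimensional \Poincare inequality (the same ingredient as the paper's Lemma \ref{lem_pbound}) forces $\|f_k\|_p=\|g_k\|_p\to 0$. The trade-off: the paper's route produces a quantitative inequality that is reused for Theorems \ref{thm_affinePI}, \ref{thm_existence}, \ref{thm_Lambdaproperties} and \ref{thm_equalitycase}, whereas yours isolates the affine-invariant content in a domain-free normalization and makes the degeneration mechanism geometrically explicit, with the degenerate case landing on the trivial limit $0$. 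For the unboundedness, your single anisotropic family $f_\epsilon=\epsilon^{(n-1)(p-n)/(np)}\phi(x_1,x_2/\epsilon,\dots,x_n/\epsilon)$ is correct and treats all $p\geq 1$ at once: by the $\gl$-transformation law of $\E$ one gets $\E f_\epsilon=\E\phi$ exactly (no polar-cap splitting is needed), while $\|\nabla f_\epsilon\|_p\sim\epsilon^{-1/n}$, so after rescaling $\phi$ (and choosing its support small enough that the shrinking boxes fit in $\domain$) you get a sequence in $B^{\mathcal A}_p(\domain)$ unbounded in $\W$; the paper instead uses two separate constructions (thin boxes for $p=1$, a steep-profile product for $p>1$), and both exhibit the same rate $\epsilon^{-1/n}$, which is the paper's remark that inequality \eqref{ineq_reversezhang} is asymptotically sharp.
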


\begin{theorem}
	\label{thm_existence}
Let $\domain \subset \R^n$ be a bounded open set and $p \geq 1$.
	Then the infimum $\affineeigenvalue$ in \eqref{def_affineeigenvalue} is attained for some function $f^{{\mathcal A}}_p \in \W$, if $p>1$, and $f^{{\mathcal A}}_1 \in \BV$, if $p=1$.
\end{theorem}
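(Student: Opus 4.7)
The proof should follow the direct method of the calculus of variations, with Theorems \ref{thm_affinePI} and \ref{thm_affineRK} replacing the reflexivity plus coercivity of the classical Sobolev norm. Since $R^{\mathcal A}_p$ is $0$-homogeneous, pick a minimizing sequence $\{f_k\}$ for \eqref{def_affineeigenvalue} normalized by $\|f_k\|_{L^p(\domain)} = 1$, so that $\E^p f_k \to \affineeigenvalue$. Setting $C := 1 + \sup_k \E f_k < \infty$, one has $\{f_k\} \subset C \cdot B^{\mathcal A}_p(\domain)$. Theorem \ref{thm_affineRK} then yields, upon passing to a subsequence, $f_k \to f$ strongly in $L^p(\domain)$; the normalization survives in the limit, so $\|f\|_{L^p(\domain)} = 1$ and in particular $f \not\equiv 0$.

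The decisive ingredient is the lower semicontinuity estimate
\[
\E f \leq \liminf_{k\to\infty} \E f_k,
\]
which combined with the reverse bound between $\E f$ and $\gnorm f$ recalled before question (A) forces $f \in \W$. This inequality is exactly the analytic content of the weak closure of the affine ball with respect to $\E$ that is announced in the abstract and established earlier in the paper through the \BS inequality applied to the $L^p$ centroid bodies of the sequence. Granting it, one concludes
\[
\affineeigenvalue \leq R^{\mathcal A}_p(f) = \E^p f \leq \liminf_{k\to\infty} \E^p f_k = \affineeigenvalue,
\]
so $f^{\mathcal A}_p := f$ attains the infimum.

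For $p=1$ the same scheme is carried out in $\BV$, because $W^{1,1}_0(\domain)$ is non-reflexive and a minimizer cannot be expected there. One extends $\E$ to $\BV$ by replacing $|\nabla f|$ with the total variation of the distributional derivative $Df$; the minimizing sequence is bounded in $\BV$ via Theorem \ref{thm_affinePI} together with the reverse estimate, and the compact embedding $\BV \hookrightarrow L^1(\domain)$ provides an $L^1$-convergent subsequence. The $\BV$ version of the weak-closure principle then supplies the analogous lower semicontinuity and delivers $f^{\mathcal A}_1 \in \BV$.

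The main obstacle is precisely that lower semicontinuity step. The functional $\E$ is not convex, and Theorem \ref{thm_affineRK} explicitly states that $B^{\mathcal A}_p(\domain)$ is unbounded in $\W$; hence a uniform bound on $\E f_k$ does not furnish a weakly $W^{1,p}$-convergent subsequence, and the textbook argument based on weak lower semicontinuity of $f \mapsto \gnorm f$ is unavailable. One must instead exploit the geometric description of $\E$ through $L^p$ centroid bodies together with the \BS inequality to rescue semicontinuity from mere strong $L^p$ convergence of the $f_k$, a step genuinely more delicate than in the classical Rayleigh minimization for $\Delta_p$.
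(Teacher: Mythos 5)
There is a genuine gap, and it lies exactly where you place the weight of the argument. You declare the lower semicontinuity $\E f \leq \liminf_k \E f_k$ to be the decisive ingredient, but you do not prove it: you appeal to a ``weak closure of the affine ball'' announced in the abstract and to the \BS inequality applied to centroid bodies, which is not an argument. Worse, as you set things up the step cannot even be formulated: with only strong $L^p$ convergence of the $f_k$ and no gradient bounds, the limit $f$ need not be weakly differentiable, so $\E f$ is undefined, and deducing $f \in \W$ afterwards from the reverse inequality is circular. Your diagnosis that weak $W^{1,p}$ compactness is unavailable is also a misreading of Theorem \ref{thm_affineRK}: the unboundedness of $B^{\mathcal A}_p(\domain)$ in $\W$ is irrelevant here, because a \emph{minimizing sequence} is not an arbitrary sequence in the affine ball. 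With the normalization $\|f_k\|_p = 1$, Theorem \ref{thm_reversezhang} gives $\gnorm{f_k} \leq \left( \E f_k / \constRZ \right)^{n}$, so the minimizing sequence \emph{is} bounded in $\W$, and for $p>1$ reflexivity plus \RK yield $f_k \rightharpoonup f$ in $\W$ and $f_k \to f$ in $L^p(\domain)$, with $f \in \W$ for free.

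This is precisely the paper's route, and it also shows how the semicontinuity is actually obtained despite the non-convexity of $\E$: for each fixed $\xi \in \S$ the map $g \mapsto \|\nabla_\xi g\|_p$ is a norm, hence weakly lower semicontinuous, so $\liminf_k \|\nabla_\xi f_k\|_p \geq \|\nabla_\xi f\|_p$ pointwise in $\xi$; Lemma \ref{lem_pbound} together with $\|f_k\|_p = 1$ gives the uniform bound $\|\nabla_\xi f_k\|_p \geq c > 0$, so the integrands $\|\nabla_\xi f_k\|_p^{-n}$ are bounded by $c^{-n}$ on the finite measure space $\S$, and a reverse Fatou argument yields
\[
\int_{\S} \|\nabla_\xi f\|_p^{-n}\, d\xi \;\geq\; \limsup_{k \to \infty} \int_{\S} \|\nabla_\xi f_k\|_p^{-n}\, d\xi ,
\]
which is exactly $\E f \leq \liminf_k \E f_k$. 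Your sketch for $p=1$ is closer to the mark (for the total variation in a fixed direction, lower semicontinuity under $L^1$ convergence does follow from the distributional definition, and the same Fatou argument applies), but for $p>1$ your proposal as written replaces the one nontrivial step by a citation to a result you have not stated or proved, while discarding the boundedness-in-$\W$ observation that makes the paper's proof work.
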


Before we go further and state more results, we present a little bit of definition.

For $p > 1$, we introduce the affine $p$-Laplace operator $\affplap$ on $\W$ as the non-local quasilinear operator in divergence form given by

\begin{equation}
	\label{def_affinelaplacian}
\affplap f := -{\div} \left( H_{f}^{p-1}(\nabla f) \nabla H_{f}(\nabla f) \right),
\end{equation}
where

\begin{equation} \label{def_Hf}
	H_{f}^p(v):= \constE^{-n} \E^{p+n}(f) \int_{\S} \| \nabla_\xi f \|_p^{-n-p} |\langle \xi, v \rangle|^p\, d\xi.
\end{equation}
When $p = 2$, the operator $\affplap[2]$ coincides with the affine laplacian introduced by Schindler and Tintarev in \cite{schindler2018compactness} by means of an interesting property of $\mathcal E_2$ that works in the specific case $p=2$.

As we shall see, the operator $\affplap$ satisfies two fundamental properties that justify its name. Firstly, the operators $\affplap$ and $\Delta_p$ coincide for radial functions
and, secondly, $\affplap$ verifies the affine invariance property: for any $f \in \W$ and $T \in \sl$,

\[
\affplap (f \circ T) = (\affplap f) \circ T \ \ {\rm on}\ \ T^{-1}(\domain),
\]
where $\sl$ denotes the special linear group of $n \times n$ matrices with determinant equal to $1$.

The affine $p$-Laplace operator for $p > 1$ will appear in connection with the derivative of the Zhang term $\E f$ with respect to $f$. In particular, it will be shown that a minimizer $f^{\mathcal A}_p \in \W \setminus \{0\}$ for $\affineeigenvalue$ is a weak solution of the $(p-1)$-homogeneous equation

\begin{equation}
		\label{eq_ELZhang}
		 \affplap f = \affineeigenvalue |f|^{p-2} f \ \ {\rm in}\ \ \domain.
\end{equation}
Notice that \eqref{eq_ELZhang} along with the definition of $\affineeigenvalue$ implies that this is the smallest (not necessarily simple) eigenvalue of $\affplap$ on $\W$ with associated eigenfunction $f^{\mathcal A}_p$.

The issue (D) and part of (E) are addressed in the following result:
	
\begin{theorem} \label{thm_ELZhang}
	Let $\domain \subset \R^n$ be a bounded open set and $p > 1$. Then, $f^{{\mathcal A}}_p \in \W$ is an eigenfunction of the operator $\affplap$ on $\W$ corresponding to $\affineeigenvalue$ in the sense of \eqref{eq_ELZhang} if, and only if, it minimizes the affine Rayleigh $p$-quotient $R^{{\mathcal A}}_p(f)$. In particular, $\affineeigenvalue$ is the smallest among all real Dirichlet eigenvalues of $\affplap$. Moreover, each eigenfunction (or minimizer) corresponding to $\affineeigenvalue$ is a bounded function in $C^{1, \alpha}(\domain)$ (and in $C^{1, \alpha}(\overline{\domain})$ whenever $\domain$ has $C^{2, \alpha}$ boundary) which does not change sign when $\domain$ is connected.
\end{theorem}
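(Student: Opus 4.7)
The plan is to identify \eqref{eq_ELZhang} as the Euler-Lagrange equation of the minimization of $R^{{\mathcal A}}_p$, and then to exploit the structural fact that, once $f$ is fixed, the operator $\affplap$ becomes a bona fide quasilinear operator of $p$-Laplacian type: indeed, the function $H_f$ in \eqref{def_Hf} depends on $f$ only through the global quantities $\|\nabla_\xi f\|_p$ and is therefore independent of $x$.

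For the first-variation computation, I would fix $f \in \W$ with $\E f > 0$ and $\varphi \in C^\infty_c(\domain)$, write $\E^p(f) = \constE^p\bigl(\int_\S \|\nabla_\xi f\|_p^{-n}\,d\xi\bigr)^{-p/n}$, and differentiate in $t$ at $t=0$. Swapping $d/dt$ with the integrals over $\S$ and $\domain$, which is justified by the uniform positivity $\inf_\xi \|\nabla_\xi f\|_p > 0$ supplied by Theorem \ref{thm_affinePI}, leads to
\[
\tfrac{1}{p}\frac{d}{dt}\E^p(f+t\varphi)\Big|_{t=0} = \constE^{-n}\E^{n+p}(f)\int_\domain\int_\S \|\nabla_\xi f\|_p^{-n-p}|\nabla_\xi f|^{p-2}\nabla_\xi f\,\nabla_\xi\varphi\,d\xi\,dx.
\]
The identity $\tfrac{1}{p}\nabla_v H_f^p(v) = H_f^{p-1}(v)\nabla H_f(v)$, obtained by differentiating \eqref{def_Hf} under the sign, rewrites the $d\xi$-integral as $H_f^{p-1}(\nabla f)\nabla H_f(\nabla f)\cdot\nabla\varphi$, so $\affplap f$ is precisely $\tfrac{1}{p}$ times the variational derivative of $\E^p$ in the weak sense. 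Since $H_f$ is $1$-homogeneous, Euler's identity yields the testing relation $\int_\domain \affplap f\cdot f\,dx = \int_\domain H_f^p(\nabla f)\,dx = \E^p(f)$ for any weak solution.

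The equivalence minimizer $\Leftrightarrow$ eigenfunction now follows immediately: a minimizer of $R^{{\mathcal A}}_p$ satisfies \eqref{eq_ELZhang} by Lagrange multipliers, with the multiplier forced to equal $\affineeigenvalue$ by the testing relation; conversely any nontrivial weak solution of $\affplap f = \mu|f|^{p-2}f$ satisfies $\mu = R^{{\mathcal A}}_p(f) \geq \affineeigenvalue$, so $\affineeigenvalue$ is the smallest Dirichlet eigenvalue of $\affplap$. For the regularity of such an eigenfunction $f$, the crucial observation is that $H_f$ is a fixed norm on $\R^n$ equivalent to the Euclidean one --- the upper bound follows from \Holder and the lower bound from Theorem \ref{thm_affinePI} --- so \eqref{eq_ELZhang} reads $-\div(a(\nabla f)) = \affineeigenvalue |f|^{p-2}f$ with $a(v) = H_f^{p-1}(v)\nabla H_f(v)$ independent of $x$ and satisfying the standard $p$-Laplacian ellipticity and growth. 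Moser iteration yields $f\in L^\infty(\domain)$, the works \cite{dibenedetto1982c,tolksdorf1984regularity,uhlenbeck1977regularity} give $C^{1,\alpha}_{\mathrm{loc}}(\domain)$, and \cite{lieberman1988boundary} provides the global $C^{1,\alpha}(\overline{\domain})$ bound under $C^{2,\alpha}$ boundary regularity. Finally, since $\E(|f|) = \E f$ and $\||f|\|_p = \|f\|_p$, the function $|f|$ is also a minimizer, hence a non-negative $C^{1,\alpha}$ solution of the same frozen quasilinear equation; V\'azquez's strong maximum principle (\cite{pucci2004strong,pucci2007j}) then forces $|f|>0$ on each connected component of $\domain$, so that $f$ has constant sign whenever $\domain$ is connected.

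The principal technical hurdle lies in the first step: rigorously differentiating $\E^p$ under the $d\xi$-integral and interchanging orders via Fubini. This rests squarely on the uniform positivity of $\xi\mapsto\|\nabla_\xi f\|_p$ on $\S$ provided by Theorem \ref{thm_affinePI} --- without the affine \Poincare inequality established there, even the very definition of $\affplap f$ as a distribution is problematic. Once this differentiability is secured and $H_f$ is recognized as an $x$-independent norm on $\R^n$, the remaining steps reduce to Lagrange multipliers, classical quasilinear regularity, and the strong maximum principle.
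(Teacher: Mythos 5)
Your proposal is correct and follows essentially the same route as the paper: first variation of $\E^p$ producing the weak Euler--Lagrange equation for $\affplap$, testing with $f$ via the $1$-homogeneity of $H_f$ to identify the multiplier with $\affineeigenvalue$ and to show it is the least eigenvalue, then freezing $H_f$ into an $x$-independent Wulff-type quasilinear operator with standard $p$-Laplacian structure so that De Giorgi--Nash--Moser iteration, the $C^{1,\alpha}$ theory of DiBenedetto--Tolksdorf--Lieberman, and the strong maximum principle give boundedness, regularity, and constancy of sign. The only slips are bibliographic: the uniform bounds $D\|f\|_p \leq \|\nabla_\xi f\|_p \leq \gnorm{f}$ that make $H_f$ a norm equivalent to the Euclidean one (and justify differentiating under the integral) come from Lemma \ref{lem_pbound} together with Cauchy--Schwarz, not from Theorem \ref{thm_affinePI}, and the paper reaches $L^\infty$ in two stages ($L^s$ for all $s$ via a Guedda--V\'eron type argument, then boundedness), a bootstrap your single appeal to Moser iteration compresses but does not contradict.
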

As an immediate consequence of this result, each eigenfunction (or minimizer) $f^{{\mathcal A}}_p \in \W$ of $\affineeigenvalue$ has defined sign and the set of eigenfunctions (or minimizers) associated to $\affineeigenvalue$ contains at least the two half straights $\{t f^{{\mathcal A}}_p:\, t > 0\}$ and $\{t f^{{\mathcal A}}_p:\, t < 0\}$.

The questions (F) and (G) are answered in the following result:

\begin{theorem} \label{thm_affineFK}
Let $p \geq 1$ and $\mathbb E$ be an Euclidean ellipsoid in $\R^n$. Then, the affine $p$-\FK inequality \eqref{ineq_affineFK} holds for every bounded open subset $\domain \subset \R^n$ with the same Lebesgue measure as $\mathbb E$. Moreover, equality holds in \eqref{ineq_affineFK} if, and only if, $\domain$ is an Euclidean ellipsoid of the same measure.
\end{theorem}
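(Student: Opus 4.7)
The plan is to reduce the statement to the ball case by affine invariance, establish the inequality via an affine Pólya-Szegő principle, and handle the equality case in the spirit of \BZ using the regularity theory developed in Theorem \ref{thm_ELZhang}. First, because $R^{\mathcal A}_p$ is invariant under volume-preserving affine maps (as noted just after \eqref{def_affineeigenvalue}) and any ellipsoid $\mathbb E$ with $|\mathbb E|=|\B|$ equals $T(\B)+x_0$ for some $T\in\sl$ and $x_0\in\Rn$, the map $f\mapsto f\circ T$ realizes a bijection $\W[\mathbb E]\to \W[\B]$ that preserves the affine Rayleigh quotient. Hence $\affineeigenvalue[\mathbb E]=\affineeigenvalue[\B]$, so it suffices to prove the theorem with $\mathbb E$ replaced by a ball $\B$ of volume $|\domain|$.

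For the inequality, given $f\in\W\setminus\{0\}$ let $f^\star$ be its Schwarz symmetric decreasing rearrangement; then $f^\star\in \W[\B]$ and $\|f^\star\|_{L^p(\B)}=\|f\|_{L^p(\domain)}$. I would invoke the affine \PZ inequality of Lutwak-Yang-Zhang and \cite{cianchi2009affine}, which states that $\E(f^\star)\le \E(f)$ and is itself strictly stronger than the classical \PZ inequality because $\E(g)=\gnorm{g}$ whenever $g$ is radial. This yields $R^{\mathcal A}_p(f^\star)\le R^{\mathcal A}_p(f)$, and since $f^\star$ is admissible for $\affineeigenvalue[\B]$, taking the infimum over $f$ gives $\affineeigenvalue\ge \affineeigenvalue[\B]$.

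Suppose now that equality holds in \eqref{ineq_affineFK}. By Theorem \ref{thm_existence} there is a minimizer $f^{\mathcal A}_p$, and the chain above forces equality in the affine \PZ inequality applied to $f^{\mathcal A}_p$. The characterization of equality in that inequality, again from \cite{cianchi2009affine}, asserts that, under a \BZ-type non-degeneracy assumption $|\{|\nabla f^{\mathcal A}_p|=0\}\cap\{0<f^{\mathcal A}_p<\esssup f^{\mathcal A}_p\}|=0$, the function $f^{\mathcal A}_p$ must coincide, up to translation, with a radial decreasing profile composed with some $A\in\sl$; since its support is (essentially) $\domain$, this forces $\domain$ to be an ellipsoid of volume $|\mathbb E|$.

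The main obstacle is verifying the non-degeneracy condition for the minimizer. For $p>1$ one leverages the $C^{1,\alpha}$ regularity and the strong maximum/Harnack machinery for the \EL equation $\affplap f^{\mathcal A}_p=\affineeigenvalue\,|f^{\mathcal A}_p|^{p-2}f^{\mathcal A}_p$ from Theorem \ref{thm_ELZhang} (mimicking the argument used for the classical $p$-Laplace eigenfunction) to rule out critical plateaus inside the positivity set. For $p=1$ the minimizer lies in $\BV$, and one would work in a $BV$-\PZ framework and pass to the limit $p\to 1^+$ in the spirit of \cite{kawohl2003isoperimetric,fusco2009stability}, where a limiting affine Cheeger-type constant and a quantitative affine isoperimetric inequality replace the missing interior smoothness.
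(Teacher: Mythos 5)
Your reduction to the ball and the use of the affine \PZ principle for the inequality when $p>1$ is exactly the paper's route, but the equality case contains two genuine gaps. First, the Brothers--Ziemer type hypothesis in Nguyen's theorem (Proposition \ref{thm_affinePZ}) is a condition on the \emph{rearrangement} $f^*$, namely $\vol(\{|\nabla f^*|=0,\ 0<f^*<\esssup f\})=0$, not on the minimizer $f^{\mathcal A}_p$ itself; and your plan to verify non-degeneracy by ``Harnack/strong maximum principle for $\affplap$ to rule out critical plateaus'' is precisely the hard step and is not carried out (the strong maximum principle gives positivity, not absence of intermediate-level plateaus). The paper sidesteps this entirely: equality forces $f^*$ to be a minimizer of $\affineeigenvalue[\domain^*]$, and since $\E f^*=\gnorm{f^*}$ for radial functions one gets $\classicaleigenvalue[\domain^*]\le R_p(f^*)=\affineeigenvalue[\domain^*]\le\classicaleigenvalue[\domain^*]$, so $f^*$ is a first Dirichlet eigenfunction of the classical $p$-Laplacian on a ball and hence radially \emph{strictly} decreasing, which verifies the hypothesis. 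Second, even after $f^{\mathcal A}_p(x)=F(|A(x-x_0)|)$ is known, your conclusion ``its support is (essentially) $\domain$, hence $\domain$ is an ellipsoid'' is unjustified for a general bounded open set (the minimizer may a priori be supported in a proper subset, e.g.\ one component of a disconnected $\domain$). The paper instead sets $f_A=f\circ A^{-1}$ with $\det A=1$, notes $f_A$ is radial so $\classicaleigenvalue[A\domain]=\affineeigenvalue[A\domain]=\classicaleigenvalue[\domain^*]$, and invokes the \emph{classical} \FK equality case to conclude that $A\domain$ is a ball.

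The case $p=1$ is also not proved by your sketch: the affine \PZ principle as cited is only available for $p>1$, and the proposed limit $p\to1^+$ with an ``affine Cheeger constant and a quantitative affine isoperimetric inequality'' relies on tools not established here (the paper explicitly notes the absence of quantitative affine results of this kind). The paper's $p=1$ argument is direct and elementary: for a $\BV$ minimizer, \Holder's inequality and the sharp $L^1$ \SZ inequality give
\begin{equation}
\affineeigenone \;=\; \frac{\E[1]f}{\|f\|_1} \;\ge\; \frac{\E[1]f}{\|f\|_{\frac{n}{n-1}}\vol(\domain)^{1/n}} \;\ge\; \frac{n\vball n^{1/n}}{\vol(\domain^*)^{1/n}} \;\ge\; \affineeigenone[\domain^*],
\end{equation}
and equality forces equality in the Zhang inequality (so $f$ is a multiple of the characteristic function of an ellipsoid) and in \Holder (so that ellipsoid is $\domain$). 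You should either adopt this argument or supply a bona fide $BV$ version of the affine \PZ principle with its equality case before claiming the $p=1$ statement.
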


The next result compares the first affine and classical eigenvalues $\affineeigenvalue$ and $\lambda_{1,p}(\domain)$ for any $p \geq 1$.

\begin{theorem}
	\label{thm_Lambdaproperties}
	Let $\domain \subset \R^n$ be a bounded open set and $p \geq 1$. Then the following inequalities hold:
	\begin{enumerate}

		\item[(a)] $\affineeigenvalue \leq \classicaleigenvalue$,

		\item[(b)] $\affineeigenvalue \geq \constRZ^p \classicaleigenvalue^{1/n}$,

		\item[(c)] $\affineeigenvalue \geq \constchino \min_{T \in \sl} \classicaleigenvalue[T(\domain)]$,

	\end{enumerate}
where
\[
\constRZ:= \constAbsRZ\left( \max_{\xi \in \S} w(\domain, \xi) \right)^{-\frac {n-1}{n}}.
\]
with $w(\domain, \xi)$ denoting the width of $\domain$ in the direction $\xi$ and
$\constAbsRZ$ and $\constchino$ being as in Theorems \ref{thm_reversezhang} and \ref{thm_chino}, respectively.
%\[
%\constchino := \frac{\pi^{\frac 1{2} + \frac p2} \Gamma(\frac{n+p}2) \Gamma( 1 + \frac np)^{\frac pn}}{2^{p + 1} \Gamma(1+\frac n2)^{\frac pn + 1} \Gamma(\frac{p+1}2) \Gamma(1+\frac 1p)^p}.
%\]
	Moreover, inequality in {\normalfont (b)} is always strict.
\end{theorem}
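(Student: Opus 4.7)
The three parts will be handled separately, each leveraging a different prior result. For (a), recall the pointwise bound $\E f \leq \gnorm f$ valid on all of $\W$ for every $p \geq 1$ (noted in the introduction and proved in \cite{lutwak2002sharp}). Dividing by $\|f\|_{L^p(\domain)}^p$ and taking infima yields $\affineeigenvalue \leq \classicaleigenvalue$ at once.

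For (b), the plan is to combine the reverse Sobolev--Zhang inequality from Theorem \ref{thm_reversezhang} with the classical Poincaré inequality. Granting that Theorem \ref{thm_reversezhang} delivers an estimate of the shape
\begin{equation*}
\E^p f \geq \constRZ^p \gnorm f^{p/n} \|f\|_{L^p(\domain)}^{p(n-1)/n},
\end{equation*}
with $\constRZ = \constAbsRZ (\max_{\xi \in \S} w(\domain, \xi))^{-(n-1)/n}$, one substitutes the bound $\gnorm f^{p/n} \geq \classicaleigenvalue^{1/n} \|f\|_{L^p(\domain)}^{p/n}$ obtained by raising the classical Poincaré inequality to the power $1/n$. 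This collapses the $\|f\|_{L^p(\domain)}$ exponents to $p$ and, after dividing by $\|f\|_{L^p(\domain)}^p$ and taking the infimum, yields $\affineeigenvalue \geq \constRZ^p \classicaleigenvalue^{1/n}$. For strictness: equality throughout would force a single $f \in \W$ to simultaneously extremize both the reverse Zhang inequality (whose extremals form a rigid class, roughly characteristic functions of special convex bodies, lying outside $\W$ for $p>1$) and the classical Poincaré inequality (whose extremals are the smooth, sign-definite first $p$-Laplacian eigenfunctions). These equality classes are disjoint, so the chained estimate is strict.

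For (c), the key ingredient is Theorem \ref{thm_chino}, which we anticipate furnishes a dimensional constant $\constchino$ together with, for each $\domain$, an affine map $T^* \in \sl$ such that $\E f \geq \constchino^{1/p} \gnorm f$ holds uniformly for $f \in W^{1,p}_0(T^*(\domain))$. Because $R^{\mathcal A}_p(\cdot)$ is invariant under volume-preserving affine transformations, $\affineeigenvalue = \affineeigenvalue[T^*(\domain)]$, and the pointwise comparison on $W^{1,p}_0(T^*(\domain))$ then gives
\begin{equation*}
\affineeigenvalue = \affineeigenvalue[T^*(\domain)] \geq \constchino \classicaleigenvalue[T^*(\domain)] \geq \constchino \min_{T \in \sl} \classicaleigenvalue[T(\domain)].
\end{equation*}

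The main obstacle lies upstream, in establishing the two auxiliary Theorems \ref{thm_reversezhang} and \ref{thm_chino}; once these are in hand, the three comparisons reduce essentially to bookkeeping via Hölder-type interpolation and the $\sl$-invariance of $R^{\mathcal A}_p$. The subtlest point within the present theorem itself is the strict inequality in (b), which hinges on a careful description of the equality cases of the reverse Zhang inequality and their incompatibility with the classical Poincaré extremals inside $\W$.
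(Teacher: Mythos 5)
Parts (a) and the non-strict inequality in (b) are correct and essentially the paper's own argument (the paper evaluates at minimizers rather than taking infima, but both work). The two remaining points have genuine gaps.

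For (c), you invoke Proposition \ref{thm_chino} as if it supplied, for each domain, a single position $T^*\in\sl$ such that $\E f \geq \constchino^{1/p}\gnorm{f}$ holds \emph{uniformly} for all $f \in W^{1,p}_0(T^*(\domain))$. That is not what the Huang--Li result says: it states $\constchino \min_{A\in\sl}\gnorm{f_A} \leq \E f$ for each fixed $f$, and the optimal $A$ depends on $f$ (it is a position of the body $\L$, which varies with $f$), not on $\domain$. Consequently your step $\affineeigenvalue[T^*(\domain)] \geq \constchino\,\classicaleigenvalue[T^*(\domain)]$ is unjustified as stated (and you have also silently altered the constant from $\constchino$ to $\constchino^{1/p}$). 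The correct route, which is the paper's, is to take a minimizer $f$ of $\affineeigenvalue$ (Theorem \ref{thm_existence}) --- or argue competitor by competitor before taking the infimum --- obtain an $f$-dependent $A_0\in\sl$ with $\E f \geq \constchino \gnorm{f_{A_0}}$, note $\|f_{A_0}\|_p=\|f\|_p$ since $\det A_0=1$, and then bound $\gnorm{f_{A_0}}^p/\|f_{A_0}\|_p^p \geq \classicaleigenvalue[A_0\domain] \geq \min_{T\in\sl}\classicaleigenvalue[T(\domain)]$.

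For the strictness in (b), your argument rests on a description of the equality cases of the reverse inequality \eqref{ineq_reversezhang} (``characteristic functions of special convex bodies, lying outside $\W$'') that is established nowhere --- Theorem \ref{thm_reversezhang} contains no equality-case analysis --- and the claimed disjointness from the Poincar\'e extremals is also asserted without proof (and is not even meaningful as stated for $p=1$, where the classical extremals are characteristic functions of Cheeger sets rather than smooth eigenfunctions). Moreover, having passed to infima over all $f$, equality of the two infima does not by itself produce ``a single $f$'' extremizing both inequalities; you would first need the existence of a minimizer of $\affineeigenvalue$ (Theorem \ref{thm_existence}) to extract such an $f$. The paper obtains strictness much more directly: Theorem \ref{thm_reversezhang} asserts that the constant $\constAbsRZ$ is not best possible (even for the ball), so \eqref{ineq_reversezhang} holds with a strictly larger constant, and since $\classicaleigenvalue>0$ the inequality in (b) with the constant $\constRZ$ is automatically strict. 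Your proof should cite that built-in strictness rather than an unproved rigidity statement.
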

An immediate implication from the assertions (a) and (b) is the lower estimate in the unit ball $\B$:
\[
	\classicaleigenvalue[\B] > C_{n,p}(\B)^\frac{np}{n-1} =
		n 2^{\frac{p}{n-1}-1} (p-1)^{1-p} p^{-p}
		\left(\frac{\pi -\frac{\pi }{p}}{\sin \left(\frac{\pi }{p}\right)}\right)^p
		\frac{\vball {p-1} \vball {n-1}^{\frac{p}{n-1}} \vball n^{1-\frac{p}{n-1}}}{\vball {n+p-2}}
\]

The term appearing at the right-hand side of the part (c) of Theorem \ref{thm_Lambdaproperties} can be regarded as a modification of $\classicaleigenvalue$ that makes it an affine invariant functional which is also bounded when restricted to convex bodies. This modification principle has been investigated for many functionals such as the surface area measure, the in-radius for which one can obtain reverse inequalities in the family of convex bodies (see \cite[Chapter 10.13]{schneider2014convex}), among others. In \cite{bucur2016blaschke, bucur2018reverse}, Bucur and Fragal\'a characterized the maximizers of $\min_{T\in \sl} \lambda_{1,2}(T(\Omega))$ and $\min_{T\in \sl} h_1(\Omega)$ for $\Omega \subseteq \R^2$ convex, obtaining a reverse \FK inequality, with triangles as extremal cases.

By using the monotonicity of $\affineeigenvalue$ with respect to inclusions and John's Ellipsoid Theorem, one can easily prove that $\affineeigenvalue$ is also bounded above in the family of convex bodies with a fixed volume. The problem of finding the upper bound of \eqref{ineq_affineFK} is open and appears to be challenging (see Section 7).

	The statement (a) of Theorem \ref{thm_Lambdaproperties} leads us to the following rigidity theorem:
	\begin{theorem}
		\label{thm_equalitycase}
		Let $\domain \subset \R^n$ be a bounded open set. Then, it holds that:
		%for $p\geq 1$ then $\affineeigenvalue = \classicaleigenvalue$ if, and only if, $\domain$ is a ball.
		\begin{enumerate}
			\item[(a)] If $p > 1$ then $\affineeigenvalue = \classicaleigenvalue$ if, and only if, $\domain$ is a ball.
			%\item[(b)] If $\affineeigenone = \classicaleigenone$ then the minimizer of both eigenvalues can be taken to be the characteristic function of a ball, that is also the maximal volume ellipsoid inside $\domain$. Moreover, if $\affineeigenone = \classicaleigenone = n$ and $\domain$ is convex then, after a translation, $\domain$ is in John's position.
			\item[(b)] If $\affineeigenone = \classicaleigenone$ then the minimizer of both eigenvalues can be taken to be the characteristic function of a ball whose boundary is contained in the boundary of $\domain$.
				In particular, if $\domain$ is convex then it is an euclidean ball.
		\end{enumerate}
	\end{theorem}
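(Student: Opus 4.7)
Both parts share a common opening move: pick a minimizer $g$ of the classical Rayleigh quotient $R_p$ on $\domain$, provided by Theorem~\ref{thm_ELZhang} for $p>1$ and by the theory of Cheeger sets for $p=1$. Since $\E f\le\gnorm f$ always and $R^{\mathcal A}_p\le R_p$ always, the assumption $\affineeigenvalue=\classicaleigenvalue$ squeezes
\[
\affineeigenvalue\le R^{\mathcal A}_p(g)\le R_p(g)=\classicaleigenvalue=\affineeigenvalue,
\]
so $\E g=\gnorm g$. Because the bound $\E f\le\gnorm f$ comes from a Jensen-type inequality applied to the spherical map $\xi\mapsto\|\nabla_\xi g\|_p$, the equality case forces this map to be constant on $\S$; equivalently, the LYZ body of $g$ is a Euclidean ball.

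For part (a), I would then compare $g$ with its Schwarz symmetrization $g^*$ on the ball $\B$ with $|\B|=|\domain|$. Being radial, $g^*$ satisfies $\E g^*=\gnorm{g^*}$ and $R^{\mathcal A}_p(g^*)=R_p(g^*)$; by the same radial symmetry $\affineeigenvalue[\B]=\classicaleigenvalue[\B]$. The affine \PZ inequality gives $\E g^*\le\E g=\gnorm g$, hence $\gnorm{g^*}\le\gnorm g$, and we land in the chain
\[
\classicaleigenvalue[\B]\le R_p(g^*)\le R_p(g)=\classicaleigenvalue=\affineeigenvalue.
\]
The goal is to propagate equality to the middle inequality, for then $\gnorm{g^*}=\gnorm g$ and the \BZ rigidity theorem---applicable since a positive first $p$-eigenfunction has $|\{\nabla g=0\}\cap\{0<g<\max g\}|=0$ by the strong maximum principle and $C^{1,\alpha}$ regularity---identifies $g$ with a radial translate of $g^*$, so $\domain=\supp g$ is a ball. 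This propagation is the main technical step: my plan is to invoke the equality case of the affine \PZ inequality to conclude that $g$ and $g^*$ differ only by a volume-preserving affine transformation, so that $\domain$ is an ellipsoid. Affine invariance of $\affineeigenvalue$ then yields $\affineeigenvalue=\affineeigenvalue[\B]=\classicaleigenvalue[\B]$; the hypothesis $\affineeigenvalue=\classicaleigenvalue$ gives $\classicaleigenvalue=\classicaleigenvalue[\B]$, and classical \FK equality forces $\domain=\B$.

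For part (b), the classical minimizer is $\chi_C$ for a Cheeger set $C\subseteq\domain$, and the sandwich becomes $\E\chi_C=\operatorname{Per}(C)$. The equality case of the Petty projection inequality identifies Euclidean balls as the minimizers of $R^{\mathcal A}_1$ among characteristic functions of fixed volume, so replacing $C$ by a ball $B\subseteq\domain$ of volume $|C|$ yields $R^{\mathcal A}_1(\chi_B)=R_1(\chi_B)=\operatorname{Per}(B)/|B|=\affineeigenone=\classicaleigenone$, providing the ball-shaped minimizer of both quotients. Minimality of $B$ as a Cheeger set then forces $\partial B\subseteq\partial\domain$: otherwise a small outward normal perturbation at a point of $\partial B\cap\domain$ would strictly decrease $\operatorname{Per}(B)/|B|$, contradicting the Cheeger identity. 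Finally, if $\domain$ is convex, at each $x\in\partial B$ the supporting hyperplane of $\domain$ must coincide with the tangent plane to $\partial B$, so the support functions $h_\domain$ and $h_B$ agree in every direction and $\domain=B$.
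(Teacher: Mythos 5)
Your opening squeeze is exactly the paper's: a classical minimizer $g$ must satisfy $\E g=\gnorm{g}$, and you are right that the Jensen (power--mean) equality case only yields that $\xi\mapsto\|\nabla_\xi g\|_p$ is constant, i.e.\ that the body $\L[g]$ is a Euclidean ball. But your continuation of part (a) does not close. In the chain $\classicaleigenvalue[\B]\le R_p(g^*)\le R_p(g)=\classicaleigenvalue=\affineeigenvalue$ the two outer terms are already ordered in that same direction by the classical \FK inequality, so nothing is squeezed; and you cannot ``invoke the equality case of the affine \PZ inequality'' to force the middle inequality into an equality, because the hypothesis of that rigidity statement is precisely the equality $\E g=\E g^*$, which you have not established. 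The assumption $\affineeigenvalue=\classicaleigenvalue$ has already been spent in producing $\E g=\gnorm{g}$, and isotropy of $\L[g]$ implies neither $\E g=\E g^*$ nor radiality of $g$. So the step you label ``the main technical step'' is not a technicality: as written the plan is circular, and part (a) is not proved. The paper's route is different and much shorter at this point: from $\E f_p=\gnorm{f_p}$ it asserts directly that the positive classical eigenfunction $f_p$ is radial, whence its positivity set $\domain$ is a ball; no symmetrization or \BZ step occurs. Note that this radiality assertion is strictly stronger than the isotropy you extract from Jensen, so quoting the Jensen equality case alone does not reproduce the paper's step either --- this is exactly where an additional idea is needed if you want to repair your argument.

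Part (b) has a similar gap: ``replacing $C$ by a ball $B\subseteq\domain$ of volume $|C|$'' presupposes that a ball of that volume fits inside $\domain$, which is essentially the conclusion to be proved (for a long thin $\domain$ no such ball exists a priori); the content of (b) is that the hypothesis forces the Cheeger set itself to be a ball. You also invoke the wrong equality case: the equality the sandwich gives you is $\E[1]\chi_C=\gnorm[1]{\chi_C}$, i.e.\ equality in $\E[1]f\le\gnorm[1]{f}$ (whose Jensen equality case is constancy of the brightness function), not equality in the \PP/Zhang inequality, which compares $\E[1]\chi_C$ with $\vol(C)^{(n-1)/n}$ rather than with the perimeter and whose extremals are ellipsoids, not balls. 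The paper instead deduces from $\E[1]\chi_K=\gnorm[1]{\chi_K}$ that the Cheeger set $K$ is a ball, and then obtains $\partial K\subseteq\partial\domain$ from the mean-curvature identity for Cheeger sets: at boundary points interior to $\domain$ the mean curvature equals $\frac1{n-1}\classicaleigenone$, incompatible with a sphere of radius $r$ since $\classicaleigenone=n/r$. Your outward first-variation argument for $\partial B\subseteq\partial\domain$ and the convexity endgame are sound and parallel to this, but they only become available once the existence of a ball-shaped Cheeger set has actually been established.
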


Finally, it would be very important to provide some properties of the minimizers of $\affineeigenone$. To this end, we briefly recall the definition of Cheeger sets.

	For $\domain \subset \R^n$ a bounded open set, its Cheeger constant is defined by the infimum
	\begin{equation}
		\label{def_affineCheegerConstant}
		h_1(\domain) = \inf_{C \subseteq \domain} \frac{S(C)}{\vol(C)}
	\end{equation}
	where $S(C)$ is the surface area measure of $C$, and $C$ runs over all sets of finite perimeter (this is, such that $\chi_C \in BV(\domain)$).
	The minimum is attained by a set $C_0$ and it is known that $\chi_{C_0}$ minimizes $\classicaleigenone$.
	This minimizer set $C_0$ is called a Cheeger set of $\domain$ and $h_1(\domain) = \classicaleigenone$.

	We provide a similar relationship in the affine case, with a small twist concerning the size of the affine Cheeger set.
	For any compact set $C \subset \R^n$, a position of $C$ is a set of the form $A C + x_0$ where $A \in \gl$ and $x_0 \in \R^n$.
	We say $C \subset \domain$ is in position of maximal volume inside $\domain$ if $\vol(C) \geq \vol(A C + x_0)$ for any such position $A C + x_0 \subseteq \overline \domain$.
		
	Now let us recall the following celebrated result due to F. John. In his seminal work \cite{JohnF} (see also \cite{Ball1}), John characterized the ellipsoid of maximal volume inside a convex body in terms of its contact points. This result, originally obtained by an optimization argument, has become one of the main tools in the study of Banach-Mazur distance between convex bodies. John's characterization is perhaps better understood when we look at the affine map $T$ such that for a convex body $K\subset \R^n$ we have that the Euclidean unit ball is the ellipsoid of maximal volume inside $TK$. The convex body $TK$, under these circumstances, is said to be in John's position.
	
	Thus, if a convex body $K$ is in {\it John's position} (i.e. the Euclidean unit ball $\B \subseteq K$ is the ellipsoid of maximal volume inside $K$) there exist vectors $u_1, ..., u_m\in \boundary{K}\cap \boundary{\B}$ and positive numbers $c_1,...,c_m$ for some $m\in\mathbb{N}$ such that
	\[\sum_{i=1}^m c_iu_i=0\quad \quad \mbox{ and} \quad \quad \sum_{i=1}^m c_iu_i\otimes u_i=I_n,\]
	where $\boundary{K}$ denotes the boundary of the set $K$, $u_i\otimes u_i$ is the rank one projection in the direction $u_i$ and $I_n$ denotes the identity operator in $\R^n$.
	
	There are different extensions of John's theorem, these extensions often substitute ellipsoids by arbitrary convex bodies. In this direction, we have the characterizations obtained under slightly different assumptions by Giannopoulos, Perissinaki and Tsolomitis \cite{GianPerTso}, Bastero and Romance \cite{BasteroRom} and finally by Gordon, Meyer, Litvak and Pajor in the general case \cite{GLMP}.
	For example, in \cite{GLMP} it is stated that if $\domain$ is a convex body and $C \subseteq \domain$ is a compact set in position of maximal volume inside $\domain$, then for any $z \in C$, we can find contact points $v_1, \ldots, v_m$ of $\domain-z$ and $C-z$, contact points $u_1, \ldots, u_m$ of the polar bodies $(\domain-z)^\circ$ and $(C-z)^\circ$, and positive real numbers $c_1, \ldots, c_m$, such that
	\[\sum_{i=1}^m c_i u_i = 0, \quad \quad \langle u_i, v_i \rangle = 1 \quad \quad \mbox{ and } \quad \quad \sum_{i=1}^m c_i u_i\otimes v_i = I_n.\]
	
 We have as well the work by Gruber and Schuster \cite{GruSchuster} where they provided a beautiful proof of John's theorem using an idea of Voronoi to represent ellipsoids by points on a space of much larger dimension. Finally, there is a functional extension by Alonso-Guti\'errez, Gonz\'alez, Jim\'enez and Villa \cite{AGJV1} where the ellipsoids are replaced by a special kind of ``ellipsoidal'' functions and the convex bodies by log-concave functions.

	Let us go back to the affine Cheeger sets. We verify the following
	\begin{theorem}
		\label{thm_affineCheegerSet}
		Let $\domain \subset \R^n$ be a bounded open set. Then the quantity, here called affine Cheeger constant,
		\[h_1^{\mathcal A}(C) := \frac{\E[1] \chi_C}{||\chi_C||_1} = \frac{ (c_{n,1} \vol(\Pi^\circ C))^{-1/n}}{\vol(C)}\]
		is minimized among all measurable sets of finite perimeter by a set $C_0$, referred as affine Cheeger set, for which $\chi_{C_0}$ is a minimizer of $\affineeigenone$. Moreover, $C_0$ is in position of maximal volume inside $\domain$.
	\end{theorem}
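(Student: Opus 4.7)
The plan is to mirror the classical correspondence between Cheeger sets and first eigenfunctions of the $1$-Laplacian in the affine setting, and then exploit the $\gl$-covariance of the projection body to obtain the maximal-volume property.

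\emph{Step 1: identification with a superlevel set of an eigenfunction.} By Theorem \ref{thm_existence} applied with $p=1$, there exists a nonnegative minimizer $f \in \BV$ of $\affineeigenone$. Its superlevel sets $E_t = \{f > t\}$ have finite perimeter for a.e.\ $t>0$ by the $BV$ coarea formula, so each $\chi_{E_t}$ is an admissible competitor for $h_1^{\mathcal A}$. The layer cake identity gives $\|f\|_1 = \int_0^\infty |E_t|\,dt$, and the affine $L^1$ slicing inequality underpinning Zhang's sharp affine $L^1$ Sobolev inequality (equivalently, the concavity of $\vol(\Pi^\circ\,\cdot)^{-1/n}$ under Minkowski integration) yields
\[
\E[1] f \geq \int_0^\infty \E[1]\chi_{E_t}\,dt.
\]
Comparing these two integral representations, there must exist a level $t^* > 0$ with
\[
h_1^{\mathcal A}(E_{t^*}) = \frac{\E[1]\chi_{E_{t^*}}}{|E_{t^*}|} \leq \frac{\E[1] f}{\|f\|_1} = \affineeigenone.
\]
Conversely, any admissible set $C$ of finite perimeter supplies $\chi_C$ as a competitor in the affine Rayleigh $1$-quotient, hence $h_1^{\mathcal A}(C) \geq \affineeigenone$. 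Setting $C_0 := E_{t^*}$ forces equality, so $C_0$ minimizes $h_1^{\mathcal A}$ and $\chi_{C_0}$ minimizes $\affineeigenone$.

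\emph{Step 2: $\gl$-covariance and maximal volume.} Using the standard identity $\Pi(AC) = |\det A|\,A^{-T}\Pi C$ combined with the polar duality $(M K)^\circ = M^{-T} K^\circ$, one obtains $\Pi^\circ(AC) = |\det A|^{-1}\, A\, \Pi^\circ C$ and hence $\vol(\Pi^\circ(AC)) = |\det A|^{1-n}\vol(\Pi^\circ C)$. Translation invariance of $\Pi^\circ$ and $\vol$ together with direct substitution in the definition of $h_1^{\mathcal A}$ yields, for every $A \in \gl$ and $x_0 \in \Rn$,
\[
h_1^{\mathcal A}(AC + x_0) = |\det A|^{-1/n}\, h_1^{\mathcal A}(C).
\]
If $C_0$ were not in position of maximal volume, there would exist $A \in \gl$ and $x_0 \in \Rn$ with $A C_0 + x_0 \subseteq \overline{\domain}$ and $\vol(AC_0 + x_0) > \vol(C_0)$, equivalently $|\det A| > 1$; but then $AC_0 + x_0$ would be an admissible competitor with $h_1^{\mathcal A}(AC_0 + x_0) = |\det A|^{-1/n}\, h_1^{\mathcal A}(C_0) < h_1^{\mathcal A}(C_0)$, contradicting the minimality established in Step 1.

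\emph{Main obstacle.} The principal technicality is the rigorous deployment of the affine $L^1$ slicing inequality on $\BV$ functions vanishing outside $\domain$: one must interpret $\|\nabla_\xi f\|_1$ as the total variation of the directional distributional derivative and verify that Zhang's slicing inequality $\vol(\Pi^\circ f)^{-1/n} \geq \int_0^\infty \vol(\Pi^\circ E_t)^{-1/n}\,dt$ persists for the zero extension of $f$ to $\Rn$, with a careful accounting of the boundary jump along $\partial\domain$ for any level set meeting it. Once this ingredient is in place, Steps 1 and 2 follow directly from the affine-invariance scaling.
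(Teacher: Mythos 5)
Your proposal is correct and takes essentially the same route as the paper: the slicing bound $\E[1] f \geq \int_0^\infty \E[1]\chi_{E_t}\,dt$ (which the paper derives from the coarea formula and Minkowski's integral inequality) combined with the layer-cake identity forces almost every level set of a minimizer of $\affineeigenone$ to be itself a minimizer. The maximal-volume property is likewise obtained in the paper from the same scaling identity $R^{\mathcal A}_1(\chi_{AC}) = |\det A|^{-1/n} R^{\mathcal A}_1(\chi_C)$ that you derive from the covariance of the polar projection body.
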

	From this point forward, many questions remain open regarding the affine Cheeger sets.
	Most importantly, whether there is a connection to John's position.
	Considering the extensive work around John's theorem and all the extensions mentioned above, we find striking that, to our knowledge, there are no analytical (or more precisely, spectral) conditions available in the literature for a convex body to be in John's position.
	We mention these and some other questions in the last section.

We remark that there is no quantitative form of the affine \PZ principle proved in the literature, so the existence of a minimizer of $\affineeigenvalue$ (provided by Theorem \ref{thm_existence}) is critical for our proof of Theorems \ref{thm_affineFK}, \ref{thm_Lambdaproperties}, \ref{thm_equalitycase} and \ref{thm_affineCheegerSet}.

This work adds $\affineeigenvalue$, to the already long list of affine-invariant functionals defined on convex or non-convex sets, that get minimized or maximized precisely in the family of ellipsoids.
It provides an affine invariant version of a classical functional $\classicaleigenvalue$, and the affine inequality turns out to be stronger than the classical one (see Theorem \ref{thm_Lambdaproperties}).

The plan of the paper is as follows:

\begin{itemize}
	\item[Section 2:] Preliminaries on convex geometry - We recall some definitions and notations within the theory of convex sets as well as some closely related inequalities. We also highlight the affine \PZ principle and the affine Brothers-Ziemer result proved by Nguyen in \cite{nguyen2015new}.\\

\item[Section 3:] The affine \Poincare inequality\\ Subsection 3.1: A reverse comparison inequality for bounded open sets - We establish a reverse version of the classical inequality $\E f \leq \gnorm{f}$ which will be fundamental in the proof of Theorem \ref{thm_affinePI}. Its proof bases on the famous \BS inequality of the theory of convex geometry.\\ Subsection 3.2: Proof of Theorems \ref{thm_affinePI}, \ref{thm_affineRK} and \ref{thm_existence} - The previous subsection inequality plays an essential role in the proof of these results.\\

\item[Section 4:] Properties of extremal functions\\
Subsection 4.1: The affine \EL equation and associated operator - From explicit computation of the derivative of the Rayleigh functional $f \in \W \mapsto R^{{\mathcal A}}_p(f)$ for $p > 1$, we discover the new operator $\affplap$ underlying the Zhang's term $\E f$. As consequence, we present an important bridge between the best \Poincare constants and Dirichlet principal eigenvalues of PDEs in the affine context. This subsection proves part of Theorem \ref{thm_ELZhang}.
		\\
		Subsection 4.2: Affine invariance properties - The affine $p$-Laplace operator and some of its main properties - We show that the operator is invariant under affine transformations and becomes the $p$-Laplace operator on balls when one restricts to radial functions.\\
Subsection 4.3: Regularity properties and proof of Theorem \ref{thm_ELZhang} - We show that $\affplap$ is an affine non-local elliptic degenerate operator with $C^{1,\alpha}$ regularization property. As a by-product, we prove the remaining statements of Theorem \ref{thm_ELZhang} concerning the smoothness of eigenfunctions.\\

\item[Section 5:] The affine \FK inequality - We provide the complete proof of Theorem \ref{thm_affineFK} for any $p \geq 1$. Here the key ingredients are the sharp forms of $L^1$ classical Sobolev and affine Sobolev-Zhang inequalities and the affine \PZ principle.\\

\item[Section 6:] Comparison of eigenvalues - Again using the inequality $\E f \leq \gnorm{f}$ and its reverse counterpart of the subsection 3.1, we prove the comparisons between $\affineeigenvalue$ and $\classicaleigenvalue$ stated in Theorem \ref{thm_Lambdaproperties} and the statement of rigidity stated in Theorem \ref{thm_equalitycase}. We also prove Theorem \ref{thm_affineCheegerSet} that shows the existence of affine Cheeger sets.\\

\item[Section 7:] Open problems - We raise some questions of interest closely related to affine \Poincare inequalities and their analytical and geometric connections.
\end{itemize}

\section{Preliminaries on convex geometry}
\label{sec_preliminaries}
This section is devoted to basic definitions and notations within the convex geometry. For a comprehensive reference in convex geometry we refer to the book \cite{schneider2014convex}.

We recall that a convex body $K\subset\Rn$ is a convex compact subset of $\Rn$ with non-empty interior.
The support function $h_K$ (denoted also by $h(K,\cdot)$) is defined as
\[
h_K(y) = h(K,y) = \max\{\langle y, z\rangle \suchthat z\in K\}\, .
\]
It describes the (signed) distance of supporting hyperplanes of $K$ to the origin and uniquely characterizes $K$. If $K$ contains the origin in the interior, then we also have the gauge $\|\cdot\|_K$ and radial $r_K(\cdot)$ functions of $K$ defined respectively as
\[
\|y\|_K:=\inf\{\lambda>0 \suchthat  y\in \lambda K\}\, ,\quad y\in\Rn\setminus\{0\}\, ,
\]
\[
r_K(y):=\max\{\lambda>0 \suchthat \lambda y\in K\}\, ,\quad y\in\Rn\setminus\{0\}\, .
\]
Clearly, $\|y\|_K=\frac{1}{r_K(y)}$. We also recall that $\|\cdot\|_K$ is actually a norm when the convex body $K$ is centrally symmetric, i.e. $K=-K$, and the unit ball with respect to $\|\cdot\|_K$ is just $K$.
On the other hand, a general norm on $\Rn$ is uniquely determined by its unit ball, which is a centrally symmetric convex body.

For a convex body $K\subset \Rn$ containing the origin in its interior we define the polar body, denoted by $K^\circ$, by
\[
K^\circ:=\{y\in\Rn \suchthat \langle y,z \rangle\leq 1\quad \forall z\in K\}\, .
\]
Evidently, $h_K^{-1} = r_{K^{\circ}}$. It is also easy to see that $(\lambda K)^\circ=\frac{1}{\lambda}K^\circ$ for every $\lambda>0$. A simple computation using polar coordinates shows that
\begin{equation}
	\label{pre_polares}
	\vol(K)=\frac{1}{n}\int_{\S}r_K^n(y)dy=\frac{1}{n}\int_{\S}\|y\|^{-n}_K dy\, .
\end{equation}

The \BS inequality states that if $K$ is origin-symmetric, then
\begin{equation}
	\label{pre_blaschkesantalo}
	\vol(K) \vol(K^\circ) \leq \vball n^2
\end{equation}
and equality holds if, and only if, $K$ is a centered ellipsoid, where $\vball n$ is the volume of the Euclidean unit ball $\B^n_2 \subset \Rn$ for $n \geq 2$.

The affine term defined in \eqref{def_affineterm} has an interesting geometrical interpretation. To any function $f \in W^{1,p}(\R^n)$ we may associate a norm $\|\cdot\|_{f,p}$ given by
\[\|\xi \|_{f,p} = \|\nabla_\xi  f\|_p = \left(\int_{\Rn} |\langle \nabla f(x), \xi  \rangle|^p dx \right)^{\frac 1p}.\]
The corresponding unit ball will be denoted by $\L$ and its volume, when computed in polar coordinates, gives the identity
\begin{equation}
	\label{eq_relE_Lf}
	\E f = \constE n^{-\frac 1n} \vol(\L)^{-\frac 1n}.
\end{equation}

Assume $p=1$ and $f$ is the characteristic function of a convex body $K$, then $L_{1,f}$ is, up to a constant depending on $n$ and $p$, the polar projection body $\Pi^\circ K$ (see \cite[Definition 10.77]{schneider2014convex}) and inequality \eqref{ineq_Zhang} becomes the \PP inequality, an affine-invariant version of the classical isoperimetric inequality.
The set $\L$ appears in the literature, sometimes with the notation $\Pi_p^\circ f$ (see for example \cite{alonso2018zhang, da2019some}) since it is a functional version of the polar projection operator. For a given convex body $K\subset\R^n$ there are many bodies associated to it. In particular, Lutwak and Zhang introduced in \cite{lutwak1997blaschke} for a convex body $K$ its $\Lp$-centroid body $\Gamma_pK$ defined by

\[
h_{\Gamma_pK}^p(y):=\frac{1}{\constCentroid \vol(K)}\int_{K}|\langle y,z\rangle|^p dz\quad \mbox{ for }y\in\R^n\, ,
\]
where

\[
\constCentroid  = \frac{\vball {n+p}}{\vball 2 \vball n \vball {p-1}}\, .
\]
There are some other normalizations of the $\Lp$-centroid body and the previous one is made so that $\Gamma_p \B=\B$ for the unit ball in $\R^n$ centered at the origin.

The definition of $\Gamma_pK$ can also be written as
\[
	h_{\Gamma_pK}^p(y)=\frac{1}{ \constCentroidPol  \vol(K)}\int_{\S} r_K(\xi)^{n+p}|\langle y,\xi \rangle|^p d\xi\quad \mbox{ for }y\in\R^n,
\]
with $\constCentroidPol = \frac{ n \vball {n+p-2}}{\vball 2 \vball {n-2} \vball {p-1}}$.

Inequalities (usually affine invariant) that compare the volume of a convex body $K$ and that of an associated body are common in the literature. For the specific case of $K$ and $\Gamma_pK$, Lutwak, Yang and Zhang \cite{lutwak2000lp} (see also \cite{campi2002lp} for an alternative proof) came up with what it is known as the $\Lp$ \BP centroid inequality, namely
\begin{equation}
\label{pre_BPineq}
\vol(\Gamma_pK)\geq \vol(K)\, .
\end{equation}
In this inequality, equality holds if, and only if, $K$ is a centrally symmetric ellipsoid.

The Wulff laplacian is defined, for a convex body $K$ containing the origin as interior point, as
\[
\Delta_{p,K} f(x) = -\div\left(\nabla \left( \frac {h_K^p}{p} \right) (\nabla f(x))\right) = -\div\left(h_K(\nabla f(x))^{p-1} \nabla h_K (\nabla f(x))\right).
\]
If $K = \B$ we obtain the usual $p$-Laplacian.
Regarding the affine $p$-Laplace operator, we have the following relation

\[
H_f(v) = h(G_f, v),
\]
where

\[
G_f = {\left( \frac{\vball n}{\vol(\L)}\right)^{1/n}\Gamma_p \L[f]},
\]
and the affine $p$-Laplace operator can be written in terms of the Wulff laplacian

\[
\affplap f = \Delta_{p, G_f} (f).
\]

Given a function $f \in W^{1,p}(\R^n)$, its distribution function $\mu_f:[0,\infty) \to [0,\infty)$ is defined by
\begin{equation}
	\label{def_mu}
	\mu_f(t) = \vol\left(\{x \in \Rn \suchthat |f(x)| > t\}\right).
\end{equation}
If $K$ is a convex body and $f\in \W[\Rn]$, the decreasing rearrangement $f^K:\Rn \to \R$ is the unique function with level sets of the form $\lambda K$ with $\lambda > 0$, and same distribution function as $f$.
When $K = \B$ we obtain the symmetric rearrangement function denoted by $f^*$.
%INCLUIMOS UMA REFERÊNCIA?

Besides, for any measurable set $L \subseteq \R^n$ we denote $L^*$ the closed ball centered at the origin, with same Lebesgue measure as $L$.

The classical \PZ principle states that if $f \in W^{1,p}(\R^n)$, then $f^*$ also belongs to $W^{1,p}(\R^n)$ and

\[
\gnorm f \geq \gnorm{f^*}.
\]
In \cite{nguyen2015new}, Nguyen proved the affine \PZ principle for a general affine operator, together with the corresponding \BZ type result.
We shall need the following particular case.
\begin{proposition}[Theorem 1.1 of \cite{nguyen2015new}]
	\label{thm_affinePZ}
	If $p > 1$ and $f \in W^{1,p}(\R^n)$, then
	\begin{equation}
		\label{eq_affinepzprinciple}
		\E f \geq \E f^*.
	\end{equation}
	Moreover, if $f$ is a non-negative function such that
	
\[
\vol(\{x \in \Rn \suchthat |\nabla f^*| = 0 \hbox{ and } 0 < f^*(x) < \esssup f\} ) = 0,
\]
where $\esssup$ denotes the essential supremum of $f$, then the equality holds in \eqref{eq_affinepzprinciple} if, and only if,

\[
f(x) = f^{\mathbb E}(x+x_0),
\]
where $x_0 \in \Rn$ and ${\mathbb E}$ is an origin symmetric ellipsoid.
\end{proposition}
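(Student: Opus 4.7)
My plan is to establish \eqref{eq_affinepzprinciple} by exploiting the geometric representation $\E f = \constE n^{-1/n} \vol(\L)^{-1/n}$ recorded in \eqref{eq_relE_Lf}, which converts the target inequality into the volume comparison $\vol(\L) \leq \vol(\L[f^*])$. Since $f^*$ is radial, $\L[f^*]$ is a Euclidean ball whose volume is explicit in terms of $\|\nabla f^*\|_p$, so the right-hand side is a concrete quantity and the problem is reduced to a purely convex-geometric one.

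By density I would first reduce to non-negative $f \in C_c^\infty(\Rn)$ and then apply the coarea formula to express the support function of $\L^\circ$, namely $h_{\L^\circ}(\xi) = \|\nabla_\xi f\|_p$, as an $L^p$-integral over the super-level sets $\{f > t\}$. This realizes $\L$ as an $L^p$-Minkowski combination of polar $L^p$-projection bodies of those super-level sets. Applying the $L^p$ Petty projection inequality level by level (the dual form of the $L^p$ Busemann-Petty centroid inequality \eqref{pre_BPineq}) replaces each $\{f > t\}$ by its Euclidean ball of equal volume while enlarging $\vol(\L)$, and a Minkowski-type integral estimate assembles the level-wise comparisons into the single inequality $\vol(\L) \leq \vol(\L[f^*])$. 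This is exactly the desired affine \PZ principle.

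For the equality analysis I would track each step. Equality in the $L^p$ Petty projection inequality at level $t$ forces the super-level set $\{f > t\}$ to be an origin-symmetric ellipsoid, and equality in the Minkowski integral step forces all these ellipsoids to be homothetic to a common ellipsoid $\mathbb E$, after translation by a common vector $-x_0$. The additional hypothesis $\vol(\{|\nabla f^*| = 0,\ 0 < f^*(x) < \esssup f\}) = 0$ is the nondegeneracy condition of the classical \BZ rigidity theorem, which rules out plateaus in the radial profile and, combined with the equimeasurability of $f$ and $f^*$, forces $f(x) = f^{\mathbb E}(x + x_0)$. I expect the most delicate step to be showing that the ellipsoidal shape of the super-level sets is independent of $t$: the affine transformation mapping $\{f > t\}$ to a Euclidean ball must be the same for every $t$, and it is precisely here that the \BZ nondegeneracy assumption plays an essential role by excluding the level-dependent affine reparametrizations that would otherwise be compatible with equality.
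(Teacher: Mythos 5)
You should first note that the paper offers no proof of this proposition: it is imported verbatim as Theorem 1.1 of \cite{nguyen2015new}, so your outline has to be measured against the known proofs (Cianchi--Lutwak--Yang--Zhang for the inequality, Nguyen for the equality case) rather than against an argument in the text.

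Your reduction of \eqref{eq_affinepzprinciple} to $\vol(\L) \leq \vol(\L[f^*])$ via \eqref{eq_relE_Lf} is fine, but the central step --- realizing $\L$ as an $L^p$-Minkowski combination of polar $L^p$-projection bodies of the super-level sets and applying the $L^p$ \PP inequality level by level --- breaks down for $p>1$, which is precisely the range of the proposition. The coarea formula gives
\[
\|\nabla_\xi f\|_p^p \;=\; \int_0^\infty \int_{\{f=t\}} |\langle \nu(x), \xi\rangle|^p\, |\nabla f(x)|^{p-1}\, d\mathcal H^{n-1}(x)\, dt ,
\]
and the weight $|\nabla f|^{p-1}$ (which disappears only when $p=1$) means the inner integral is not the $p$-th power of the support function of any $L^p$-projection body of $\{f>t\}$: each slice couples the geometry of the level set with the speed at which $f$ crosses it, and it cannot be compared level by level with the corresponding quantity for $f^*$, whose weight $|\nabla f^*|^{p-1}$ is governed by the distribution function rather than by the individual level sets of $f$. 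What you describe is Zhang's $p=1$ argument (projection bodies of level sets plus Minkowski's integral inequality), and it does not extend to $p>1$. The known proof instead packages the $|\nabla f|^{p-1}$-weighted normal image globally: one solves the even $L_p$ Minkowski problem to obtain the LYZ body $\langle f\rangle_p$ with $\|\nabla_\xi f\|_p^p = c\, h_{\Pi_p \langle f\rangle_p}(\xi)^p$, applies the $L_p$ Petty projection inequality (equivalently the $L_p$ \BP centroid inequality \eqref{pre_BPineq}) \emph{once} to this single body, and then compares $\vol(\langle f\rangle_p)$ with $\gnorm{f^*}$ by a separate coarea/Jensen argument. Correspondingly, the equality analysis in Nguyen's theorem is not a matter of showing that level-wise ellipsoids are homothetic --- there are no level-wise ellipsoids to speak of; the rigidity comes from the equality case of the $L_p$ Petty projection inequality applied to the LYZ body, combined with a delicate \BZ-type argument in which the stated nondegeneracy hypothesis on $f^*$ enters. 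So the skeleton you propose needs to be replaced, not merely completed.
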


\section{The affine \Poincare inequality}
\label{sec_affinepoinc}

This section is devoted to establish a powerful result, Theorem \ref{thm_reversezhang}, by using the \BS inequality. This result will be fundamental in the proof of Theorems \ref{thm_affinePI}, \ref{thm_affineRK}, \ref{thm_existence}, \ref{thm_Lambdaproperties} and \ref{thm_equalitycase}, being that the first three ones will be presented still in this section.

\subsection{A reverse comparison inequality in bounded open sets}
\label{sec_existence}

Our central reverse inequality is stated as

\begin{theorem}
	\label{thm_reversezhang}
	Let $\domain \subseteq \Rn$ be a bounded open set and $p \geq 1$. Then, for any $C^1$ function $f :\Rn \to \R$ with support in $\domain$,

\begin{equation}
		\label{ineq_reversezhang}
		\E f \geq \constRZ \|f\|_p^{\frac {n-1}{n}} \gnorm{f }^{1/n},
	\end{equation}
	where
	
\[
\constRZ = \constAbsRZ \left( \max_{\xi \in \S} \w(\domain, \xi) \right)^{-\frac {n-1}{n}},
\]
being $\w(\domain, \xi)$ the width of $\domain$ in the direction $\xi$,	

\[
\constAbsRZ = \constE \left(\frac 2n \frac{\vball {n-1}}{\vball n^2} \constTal^{n-1} \constAuxRZ^{1/p} \right)^{1/n},
\]

\[
\constAuxRZ = \frac 1{n \vball n} \int_\S |\langle e_1, \xi \rangle|^p d\xi = \frac{2 \omega _{n+p-2}}{n \omega _n \omega _{p-1}}
\]
and

\[
\constTal =
	\left\{ \begin{array}{lc}
		\frac 2p (p-1)^{\frac 1p - 1} \frac{\pi-\pi/p}{\sin(\pi/p)},& {\rm if}\ p>1\\
		2,& {\rm if}\ p=1
		\end{array}
	\right.
\]
Moreover, inequality is {\it strict} in the sense that the constant $\constAbsRZ$ is not the best possible, even when $\domain = \B$.
\end{theorem}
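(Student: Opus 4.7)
The plan is to route the inequality through the Blaschke-Santal\'o inequality applied to the centrally symmetric convex body $\L$. From \eqref{eq_relE_Lf}, $\E f = \constE (n\vol(\L))^{-1/n}$, so the desired lower bound on $\E f$ is equivalent to an upper bound on $\vol(\L)$; and since $\L$ is origin-symmetric, \eqref{pre_blaschkesantalo} gives $\vol(\L)\,\vol(\L^\circ)\leq \vball{n}^{2}$, reducing the task to a matching lower bound on $\vol(\L^\circ)$. A short algebraic manipulation shows it suffices to prove
\[
\vol(\L^\circ) \geq \frac{2\,\vball{n-1}\,\constTal^{\,n-1}\,\constAuxRZ^{1/p}}{W^{\,n-1}}\,\|f\|_p^{\,n-1}\,\gnorm{f}.
\]

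To produce this lower bound, I would first identify $\L^\circ$ concretely. Hahn-Banach duality yields $\L^\circ = \{T_f(g):\|g\|_{L^q(\domain)}\leq 1\}$, where $T_f:L^q(\domain)\to\Rn$ is the linear map $T_f(g)=\int g\,\nabla f\,dx$ and $q=p/(p-1)$. For each $\xi\in\S$, testing with $g(x)=h(x\cdot\xi)$ and integrating by parts in the $(n-1)$ directions orthogonal to $\xi$ kills every component of $T_f g$ except the $\xi$-component, leaving
\[
T_f\bigl(h(x\cdot\xi)\bigr) = -\xi\int h'(s)\,\tilde f_\xi(s)\,ds,\qquad \tilde f_\xi(s):=\int_{\xi^\perp} f(s\xi+y)\,dy.
\]
Maximizing the amplitude over $h$ under $\|g\|_{L^q}\leq 1$ is a weighted H\"older problem with weight $A_\xi(s):=|\domain\cap\{x\cdot\xi=s\}|$, yielding a radial function $R(\xi)$ with $R(\xi)\,\xi\in\L^\circ$. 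Convexity and central symmetry of $\L^\circ$ then give the star-body estimate $\vol(\L^\circ)\geq \tfrac{1}{n}\int_\S R(\xi)^n\,d\xi$.

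It remains to bound $R(\xi)$ from below. Here the sharp one-dimensional Sobolev-Poincar\'e inequality on the interval of length at most $\w(\domain,\xi)\leq W$ supporting $\tilde f_\xi$ introduces the Talenti constant $\constTal$, while integration over $\S$ uses the identity $\int_\S|\langle e_1,\xi\rangle|^p\,d\xi = n\,\vball{n}\,\constAuxRZ$ to supply the factor $\constAuxRZ^{1/p}$. Matched against Blaschke-Santal\'o this produces \eqref{ineq_reversezhang} with the stated constant. I anticipate the main obstacle to be the anisotropic extraction of the exponent split $\|f\|_p^{n-1}\gnorm{f}$: a uniform 1D Poincar\'e estimate in every direction yields only the weaker $\|f\|_p^{n}/W^{n}$, whereas the target requires spreading the estimate across $\S$ so that $(n-1)$ applications of the 1D Poincar\'e contribute $\|f\|_p^{n-1}/W^{n-1}$ and one factor of $\gnorm{f}$ is harvested in the direction of the gradient --- this is precisely what the exponent $\constTal^{\,n-1}$ records. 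Strictness in \eqref{ineq_reversezhang} (that $\constAbsRZ$ is not optimal, even for $\domain=\B$) then follows from strictness in Blaschke-Santal\'o, which would force $\L$ to be a centered ellipsoid and is incompatible with the $L^p$-projection-body structure of $\L$ for non-constant $f$, together with non-saturation of the 1D Sobolev step within the admissible class.
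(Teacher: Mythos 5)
There is a genuine gap, and you have in fact pointed at it yourself: the step that converts the directional information into the anisotropic volume bound $\vol(\L^\circ)\gtrsim \constTal^{\,n-1}\constAuxRZ^{1/p}\,(\|f\|_p/W)^{n-1}\,\gnorm{f}$ is never carried out. Your proposed mechanism is the star-body formula $\vol(\L^\circ)\geq \tfrac1n\int_\S R(\xi)^n\,d\xi$ based on a lower bound for the radial function, and, as you note, a uniform directional estimate fed into that integral can only produce the isotropic bound $\sim(\|f\|_p/W)^n$; the closing remark about ``spreading the estimate across $\S$'' and ``harvesting one factor of $\gnorm{f}$ in the direction of the gradient'' is a description of the desired outcome, not an argument (there is no fixed ``direction of the gradient''). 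The paper closes exactly this point with a simple convexity argument that your route bypasses: since $h(\L^\circ,\xi)=\|\nabla_\xi f\|_p\geq \constTal\|f\|_p\,\w(\domain,\xi)^{-1}$ (the fiberwise Talenti--\Poincare bound, Lemma \ref{lem_pbound}) and $\L^\circ$ is an intersection of half-spaces, $\L^\circ$ contains the centered ball of radius $\constTal\|f\|_p/\max_\xi\w(\domain,\xi)$; on the other hand $\max_\xi h(\L^\circ,\xi)^p\geq \tfrac1{n\vball n}\int_\S\|\nabla_\xi f\|_p^p\,d\xi=\constAuxRZ\gnorm{f}^p$, so $\L^\circ$ contains a point at distance $\geq\constAuxRZ^{1/p}\gnorm{f}$ from the origin; by symmetry and convexity $\L^\circ$ then contains a double cone of volume $\tfrac2n\vball{n-1}\,r^{n-1}h$ with $r=\constTal\|f\|_p/W$ and $h=\constAuxRZ^{1/p}\gnorm{f}$, which is precisely the mixed bound, after which \BS finishes as in your outer scheme.

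A secondary problem is that your duality detour ($T_f(g)=\int g\,\nabla f$, test functions $g=h(x\cdot\xi)$, weighted \Holder with slice areas $A_\xi$) estimates the \emph{radial} function of $\L^\circ$, namely $R(\xi)^p=\int|\tilde f_\xi'(s)|^pA_\xi(s)^{1-p}ds$, whereas the quantity that the one-dimensional Talenti inequality controls from below by $\|f\|_p$ is the \emph{support} function $h(\L^\circ,\xi)=\|\nabla_\xi f\|_p$ (integrate along lines in direction $\xi$, as in Lemma \ref{lem_pbound}). Passing from $f$ to the slice averages $\tilde f_\xi$ loses information in the wrong direction of \Holder, so the bound $R(\xi)\geq \constTal\|f\|_p/\w(\domain,\xi)$ you would need is not available by the argument sketched; and in any case, as explained above, radial lower bounds alone cannot yield the exponent split. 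The strictness remark and the use of \BS are consistent with the paper, but without the cone (or some substitute exploiting convexity of $\L^\circ$) the proof of \eqref{ineq_reversezhang} with the stated constant is incomplete.
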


In order to prove the above result, we need the following useful lemma:
\begin{lemma}
	\label{lem_pbound}
	Let $\domain \subseteq \Rn$ be a bounded open set, $p \geq 1$ and $f :\Rn \to \R$ be a $C^1$ function with support in $\domain$. For each fixed $\xi \in \S$, we have

\[
\|\nabla_\xi f\|_p \geq \constTal \|f\|_p \w(\domain, \xi)^{-1}
\]
where $\constTal$ and $\w(\domain, \xi)$ is as in the statement of Theorem \ref{thm_reversezhang}.
\end{lemma}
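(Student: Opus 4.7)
The plan is to reduce the $n$-dimensional estimate to a one-dimensional Poincar\'e-type inequality along the direction $\xi$ and then integrate over the orthogonal hyperplane via Fubini. First I would fix $\xi \in \S$ and, after an orthogonal change of coordinates, assume $\xi = e_n$. Writing $x = (y,t) \in \R^{n-1} \times \R$, this makes $\nabla_\xi f = \partial_t f$ and Fubini gives
\[
\|f\|_p^p = \int_{\R^{n-1}} \int_\R |f(y,t)|^p \, dt \, dy, \qquad \|\nabla_\xi f\|_p^p = \int_{\R^{n-1}} \int_\R |\partial_t f(y,t)|^p \, dt \, dy.
\]
For each fixed $y$ the slice $g_y(t) := f(y,t)$ is $C^1$ with compact support contained in $\{t : (y,t) \in \domain\}$, a set contained in some interval of length at most $L := w(\domain, \xi)$ because $\domain$ sits inside a slab perpendicular to $\xi$ of that width; moreover $g_y$ vanishes at the endpoints of any such interval.

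The problem then reduces to the one-dimensional Poincar\'e-type inequality
\[
\int_\R |g(t)|^p \, dt \le \left(\frac{L}{t_p}\right)^p \int_\R |g'(t)|^p \, dt
\]
for every $C^1$ function $g$ with compact support in an interval of length $L$. For $p>1$ this is (up to rescaling) a Dirichlet Poincar\'e inequality for the one-dimensional $p$-Laplacian on the interval, whose extremal is the $p$-analogue of the sine function. I would obtain it by a standard variational argument in which the first integral $(p-1)|u'|^p + \mu|u|^p = \mu\|u\|_\infty^p$ of the Euler--Lagrange equation separates variables and reduces the matter to the elementary quadrature
\[
\int_0^1 (1-s^p)^{-1/p} \, ds = \frac{\pi/p}{\sin(\pi/p)}
\]
via the beta-function identity, which produces the factor $(\pi - \pi/p)/\sin(\pi/p)$ appearing in $t_p$. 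The case $p=1$ is handled by the Cheeger-type estimate $\int_\R |g'|\,dt \ge (2/L) \int_\R |g|\,dt$ on the interval, extremized by (approximate) characteristic functions, giving $t_1 = 2$.

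Applying this 1D inequality slice-by-slice, integrating in $y \in \R^{n-1}$, and taking $p$-th roots combines the two Fubini identities into $\|f\|_p \le (L/t_p) \|\nabla_\xi f\|_p$, which is the claim. The main obstacle is producing the explicit closed-form constant $t_p$ via the $p$-trigonometric and beta-function computation (and the corresponding limit for $p=1$); the Fubini reduction and the control on the slice length by $w(\domain,\xi)$ are immediate from the definition of the width.
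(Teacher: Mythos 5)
Your proof is correct and follows essentially the same route as the paper's: slice by Fubini along the direction $\xi$, observe that each slice is supported in an interval of length at most $\w(\domain,\xi)$, and apply a sharp one-dimensional Dirichlet \Poincare inequality on each line; the only difference is that the paper simply quotes Talenti's one-dimensional result at this step, while you rederive it via the Euler--Lagrange equation and the $p$-sine quadrature. One bookkeeping caveat: your quadrature $\int_0^1(1-s^p)^{-1/p}\,ds=\frac{\pi/p}{\sin(\pi/p)}$ leads to the sharp slice constant $(p-1)^{1/p}\,\frac{2\pi}{p\,\sin(\pi/p)}$, which is \emph{not} equal to $\constTal$ as defined in Theorem \ref{thm_reversezhang}: since $\pi-\pi/p=(p-1)\,\pi/p$, that constant equals $\frac{2\pi(p-1)^{1/p}}{p^{2}\sin(\pi/p)}$, i.e.\ it is smaller than the sharp value by a factor $p$ (the two coincide only at $p=2$ and in the limit $p\to1$), so the quadrature does not literally ``produce the factor $(\pi-\pi/p)/\sin(\pi/p)$''. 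This does not damage the argument: because the constant you derive dominates $\constTal$ for every $p\geq1$, the inequality of the lemma follows a fortiori (indeed in slightly stronger form), and your $p=1$ Cheeger-type slice estimate with constant $2$ is exactly the stated value; you should simply note that your sharp one-dimensional constant is $\geq \constTal$ rather than claiming the factors match.
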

\begin{proof}
	By using the sharp one-dimensional \Poincare inequality given in \cite[pag 357]{talenti1976best} with $q=p$, we get
	\begin{align*}
		\|\nabla_\xi f \|_p^p
		&= \int_{\xi^\bot} \int_{-\infty}^{\infty} \left| \frac{\partial}{\partial t} f(t \xi + x) \right|^p dt dx\\
		&\geq \constTal^p \int_{\xi^\bot} \int_{-\infty}^\infty  |f(t \xi + x)|^p dt\ dx \w(\domain, \xi)^{-p}\\
		&\geq \constTal^p \|f\|_p^p \w(\domain, \xi)^{-p}.
	\end{align*}
\end{proof}

\begin{proof}[Proof of Theorem \ref{thm_reversezhang}]
	Consider the set $\L^\circ$ whose support function is $h(\L^\circ, \xi) = \|\nabla_\xi f \|_p$.
	By Lemma \ref{lem_pbound}, we have

\[
h(\L^\circ, \xi) \geq \constTal \|f\|_p \w(\domain, \xi)^{-1},
\]
so that $\L^\circ$ contains a fixed ball of radius

\[
\frac{\constTal \|f\|_p}{\max_{\xi \in \S} \w(\domain, \xi)}.
\]

	%Now since $|(x_1, \ldots, x_n)|_2^p \leq c \sum |x_i|^p$ we have $\gnorm{f }^p \leq c \sum_i \| \nabla_{e_i} f \|_p^p$
	%and then
	%\[h(\L^\circ, v)^p = \|\nabla_v f\|_p^p \geq c \frac 1n \gnorm{f }^p\] for some $v = e_i \in S^{n-1}$.
Besides,

	\begin{align*}
		 \max_{\xi \in \S} h(\L^\circ, v)^p
		&= \max_{\xi \in \S} \int_{\Rn}|\nabla_\xi f(x)|^p dx\\
		&\geq \frac 1{n \vball n} \int_{\S} \int_{\Rn}|\nabla_\xi f(x)|^p dx d\xi\\
		&= \int_{\Rn}\frac 1{n \vball n} \int_{\S}|\nabla_\xi f(x)|^p d\xi dx\\
		&= \constAuxRZ \int_{\Rn}|\nabla f(x)|_2^p dx,
	\end{align*}
where $\constAuxRZ$ is as in the statement of Theorem \ref{thm_reversezhang}. But this means that $\L^\circ$ contains a point whose distance to the origin is at least $\constAuxRZ^{1/p} \gnorm{f }$.

By the two previous remarks and since $\L^\circ$ is symmetric, we have an entire double cone inside $\L^\circ$ and
	
\[
\vol(\L^\circ) \geq \frac 2n \gnorm f \constAuxRZ^{1/p} \vball {n-1}\left(\frac{ \constTal \|f\|_p}{\max_{\xi \in \S} \w(\domain, \xi)}\right)^{n-1}.
\]
Finally, by the \BS inequality,
		
\[
\E f = \constE \vol(\L)^{-1/n} \geq \vball n^{-2/n} \constE \vol(\L^\circ)^{1/n}
\]
and the result follows with the constant $\constAbsRZ$ of the statement of Theorem \ref{thm_reversezhang}.
	
\end{proof}

\subsection{Proof of Theorems \ref{thm_affinePI}, \ref{thm_affineRK} and \ref{thm_existence}}
Now we are ready for proving the first three theorems of the paper.

\begin{proof}[Proof of Theorem \ref{thm_affinePI}]
It is an immediate consequence of Theorem \ref{thm_reversezhang} and the classical $L^p$ \Poincare inequality \eqref{ineq_classicalpoincare}.
\end{proof}

\begin{proof}[Proof of Theorem \ref{thm_affineRK}]
	Let $(f_k) \subset \W$ be a sequence such that $\E f_k \leq 1$ for all $k \geq 1$. If, for some subsequence, $f_k$ converges to zero in $L^p(\domain)$, then the desired conclusion follows readily. Otherwise, there exists a constant $c > 0$ such that $\|f_k\|_p \geq c$ for all $k \geq 1$. In this case, by Theorem \ref{thm_reversezhang}, we know that $(f_k)$ must be bounded in $\W$. The first assertion then follows from the classical \RK theorem.

For the second part, we construct sequences of functions $(f_k)$ such that $\E f_k = 1$ and $\gnorm{f_k} \to + \infty$. Indeed, it suffices to obtain sequences $(f_k)$ that satisfy $\frac{\gnorm{f_k}}{\E f_k} \to + \infty$. We consider two cases:

\underline {The case $p=1$:}
	
Let $f_k$ be the characteristic function of the set $[0,1]^{n-1} \times [0,1/k]$ and assume without loss of generality that this set is inside $\domain$. Using the invariance property of $\E[1]$, we have
	
\begin{eqnarray*}
		\gnorm[1]{f_k} &=& 2 + (\gnorm[1]{f_1} - 2)/k,\\
		\E[1]f_k &=& k^{-\frac{n-1}n} \E[1] f_1,
\end{eqnarray*}
so that $\frac {\gnorm[1]{f_k}}{\E[1]f_k} \to +\infty$.

	\underline {The case $p>1$:}

Let $J \subset \R$ be an interval and $U \subset \Rn$ be an open set such that $J \times U \subseteq \domain$. Assuming without loss of generality that $J = [0,1]$, we define for each $k = 4,5, \ldots$, the set $A_k = [\frac 13 - \frac 1k, \frac 13] \cup [\frac 23, \frac 23+ \frac 1k]$ and the function $\phi_k: [0,1] \to [0,1]$ as
	
\begin{equation}
		\phi_k(x) = \left \{
			\begin{array}{cc}
				1 + \frac k6 - |x-\frac 12|k, & x \in A_k \\
				1, & x \in [\frac 13, \frac 23]\\
				0, & \hbox{ otherwise }
			\end{array}
			\right.
\end{equation}
Take $\eta : U \to \R$ a smooth compact-supported function and define for $(t,x) \in \R \times \R^{n-1}$,
	
\[
f_k(t,x) = \phi_k(t) \eta(x).
\]
For $\xi = (a,v) \in \R \times \R^{n-1}$, we make the following computations
	
\[
		\nabla_\xi f_k(t,x) = (a \phi_k'(t) \eta(x),  \phi(t) \nabla_v \eta(x)),
\]

\[
\int_\domain |\nabla_\xi f_k(x)|^p dx \leq C(a^p k^{p-1} + |v|^p)
\]
and

\begin{align}
		\E^{-n} f
		&\geq C \int_{\S} (a^p k^{p-1} + 1)^{-n/p} d\xi\\
		%&= C \int_{-1}^1 (a^p k^{p-1} + 1)^{-n/p} (1-t^2)^{\frac{n-3}2} da\\
		%&= C \int_{-\pi/2}^{\pi/2} (\sin(\alpha)^p k^{p-1} + 1)^{-n/p} \cos(\alpha)^{n-2} \cos(\alpha) d\alpha\\
		&= C \int_{-\pi/2}^{\pi/2} (\sin(\alpha)^p k^{p-1} + 1)^{-n/p} \cos(\alpha)^{n-2} d\alpha\\
		&\geq C \int_0^{\pi/4} (\sin(\alpha)^p k^{p-1} + 1)^{-n/p} d\alpha\\
		&\geq C \int_0^{\pi/4} (\alpha^p k^{p-1} + 1)^{-n/p} d\alpha\\
		&\geq C k^{-\frac{p-1}p} \int_0^{\frac{\pi}{4} k^{\frac{p-1}p}} (\beta^p + 1)^{-n/p} d\beta\\
		&\geq C k^{-\frac{p-1}p},
\end{align}
so that $\E f \leq C k^{\frac 1n \frac{p-1}p}$.
	
On the other hand,
	
\[
\int_\domain |\nabla f_k(x)|^p dx \geq \int_{A_k}\phi_k'(t)^p dt \int_U \eta(x)^p + \int_J \phi_k(t)^p dt \int_U |\nabla \eta(x)|^p dx \geq C k^{p-1}
\]
and we obtain

\[
\frac{\gnorm{f_k}}{\E f_k} \geq C \frac{ k^\frac{p-1}p}{k^{\frac 1n \frac{p-1}p}} \to + \infty.
\]

	Notice that in both examples we have the same divergence rate in both sides of inequality \eqref{ineq_reversezhang}, showing that this inequality is asymptotically sharp.
\end{proof}

\begin{proof}[Proof of Theorem \ref{thm_existence}]
	Let $(f_k) \subset \W$ be a sequence such that $\|f_k\|_p = 1$ and $\E^p f_k \to \affineeigenvalue$. Then, by Theorem \ref{thm_reversezhang}, $(f_k)$ must be bounded in $\W$.

Assume $p > 1$. By reflexivity and \RK theorem, up to a subsequence, we have $f_k \rightharpoonup f$ in $\W$ and $f_k \to f$ in $L^p(\domain)$.
	Clearly, $\|f\|_p = 1$. Now we claim that
	
\[
\E^p f \leq \liminf_{k \rightarrow + \infty} \E^p f_k = \affineeigenvalue
\]
which proves the theorem.

	For each fixed $\xi \in \S$, we have $\nabla_\xi f_k \rightharpoonup \nabla_\xi f$ in $L^p(\domain)$, so that

\[
\liminf_{k \rightarrow + \infty} \|\nabla_\xi f_k\|_p \geq \|\nabla_\xi f\|_p.
\]
By Lemma \ref{lem_pbound} and $\|f\|_p = 1$, we know that $\|\nabla_\xi f_k\|_p \geq c > 0$ for all $k \geq 1$. Thus, Fatou's lemma gives
	\begin{align*}
		\int_{\S} \|\nabla_\xi f\|_p^{-n} d\xi
		&\geq \int_{\S} \limsup_{k \rightarrow + \infty} \|\nabla_\xi f_k\|_p^{-n} d\xi\\
		&\geq \limsup_{k \rightarrow + \infty} \int_{\S} \|\nabla_\xi f_k\|_p^{-n} d\xi
	\end{align*}
	and the claim follows.

	For $p=1$, the same reasoning yields a sequence $(f_k) \subset W_0^{1,1}(\domain)$ converging in the $L^1$ topology to $f \in L^1(\domain)$.
	The fact that
	
\[
\liminf_{k \rightarrow + \infty} \|\nabla_\xi f_k\|_1 \geq \|\nabla_\xi f\|_1
\]
follows easily from the usual definition of $\|\nabla_\xi f\|_1$ for functions in $BV(\domain)$ (see \cite{evans2015measure}).
\end{proof}

\section{Properties of extremals (minimizers)}
\label{sec_ELandregularity}

This section is dedicated to fine differential analysis of extremals related to the affine $L^p$ \Poincare inequalities for $p > 1$. The nexus between the affine invariant $\affineeigenvalue$ and the PDEs setting for its extremals is presented below by mean of an Euler-Lagrange equation satisfied by minimum points of the affine Rayleigh $p$-quotient $R^{{\mathcal A}}_p(f)$. In particular, it will appear the affine $p$-Laplace operator $\affplap$ mentioned in the introduction. The remaining sections focus on invariance and regularity properties satisfied by such an operator and the proof of Theorem \ref{thm_ELZhang}.

\subsection{The affine \EL equation and associated operator}

The following result is a crucial ingredient for the rest of the paper:

\begin{theorem} \label{thm_ELequation}
	Let $\domain \subset \R^n$ be a bounded open set and $p > 1$. Let also $f := f^{\mathcal A}_p \in \W$ be a minimizer associated to $\affineeigenvalue$ whose existence is guaranteed by Theorem \ref{thm_existence}. Then, $f$ satisfies

\[
\int_\domain \langle H_{f}^{p-1}(\nabla f) \nabla H_{f}(\nabla f), \nabla \psi \rangle dx = \affineeigenvalue \int_\domain |f|^{p-2} f \psi dx
\]
for every function $\psi \in \W$. In PDEs language, $f$ is said to be a weak solution of the Dirichlet problem

\[
\left\{
\begin{array}{rrll}
\affplap f &=&  \affineeigenvalue |f|^{p-2}f & {\rm in} \ \ \domain,\\
	f &=& 0  & {\rm on} \ \ \boundary{\domain},
\end{array}
\right.
\]
and also an eigenfunction of the operator $\affplap$ on $\W$ corresponding to its first eigenvalue $\affineeigenvalue$.
\end{theorem}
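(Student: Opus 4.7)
The plan is to establish the Euler-Lagrange equation by a direct variational argument: compute the Gâteaux derivative of the affine Rayleigh $p$-quotient at the minimizer $f$ and identify the resulting identity with the weak form of $\affplap f = \affineeigenvalue |f|^{p-2}f$. Since the quotient is $0$-homogeneous, I may normalize $\|f\|_p = 1$, so that minimizing $R_p^{\mathcal A}$ becomes minimizing $\E^p$ subject to $\|f\|_p^p = 1$. The Lagrange multiplier rule then gives, for every $\psi \in \W$,
\[
\tfrac{d}{dt}\Big|_{t=0}\E^p(f+t\psi) = \mu\,\tfrac{d}{dt}\Big|_{t=0}\|f+t\psi\|_p^p,
\]
and the identification $\mu = \affineeigenvalue$ will follow from testing with $\psi = f$.

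The right-hand side is standard for $p>1$ and equals $\mu p \int_\domain |f|^{p-2}f\psi\,dx$. For the left-hand side, I write $\E^p(f) = \constE^p A(f)^{-p/n}$ with $A(f) = \int_\S \|\nabla_\xi f\|_p^{-n}\,d\xi$. For each fixed $\xi$, the map $t \mapsto \|\nabla_\xi(f+t\psi)\|_p^p$ is $C^1$ since $s \mapsto |s|^p$ is so, and a routine chain-rule computation yields
\[
\tfrac{d}{dt}\Big|_{t=0}\|\nabla_\xi(f+t\psi)\|_p^{-n} = -n\,\|\nabla_\xi f\|_p^{-n-p}\int_\domain |\nabla_\xi f|^{p-2}\nabla_\xi f\,\nabla_\xi \psi\,dx.
\]
Assuming the $t$-derivative commutes with $\int_\S$ (see below) and applying Fubini, I obtain
\[
\tfrac{d}{dt}\Big|_{t=0}\E^p(f+t\psi) = p\,\constE^{-n}\E^{p+n}(f)\int_\domain\!\int_\S \|\nabla_\xi f\|_p^{-n-p}|\nabla_\xi f|^{p-2}\nabla_\xi f\,\nabla_\xi\psi\,d\xi\,dx.
\]
Differentiating the definition \eqref{def_Hf} in $v$ shows that the inner $\xi$-integral is exactly $H_f^{p-1}(\nabla f)\langle\nabla H_f(\nabla f),\nabla\psi\rangle$, so the Lagrange identity becomes the weak equation in the statement, with multiplier $\mu$ identified as $\E^p(f)/\|f\|_p^p = \affineeigenvalue$ upon testing with $\psi = f$ and using that $H_f$ is $1$-homogeneous in $v$ (so $\langle \nabla H_f(v),v\rangle = H_f(v)$ by Euler's identity, which reduces the test identity to $\int_\domain H_f^p(\nabla f)\,dx = \E^p(f)$, a direct consequence of \eqref{def_Hf}).

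The main obstacle, and the only step requiring genuine care, is justifying the interchange of $\tfrac{d}{dt}$ with $\int_\S$. The natural way to control this is via Lemma \ref{lem_pbound}: for $t$ in a small neighborhood of $0$, one has $\|f+t\psi\|_p \geq \tfrac{1}{2}\|f\|_p$, and hence $\|\nabla_\xi(f+t\psi)\|_p \geq c\,(\max_{\xi\in\S} \w(\domain,\xi))^{-1}$ with $c>0$ independent of $\xi$ and $t$. Combined with the elementary pointwise bound $\bigl||a+tb|^p - |a|^p\bigr| \leq C_p |t|(|a|^{p-1}+|b|^{p-1})|b|$ and H\"older's inequality in $x$, this produces a $d\xi$-integrable dominating function on $\S$ and legitimates the exchange by dominated convergence. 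Once this analytic step is in hand, the conclusion that $f$ is a weak solution of $\affplap f = \affineeigenvalue|f|^{p-2}f$, and therefore an eigenfunction of $\affplap$ corresponding to the first eigenvalue $\affineeigenvalue$, is immediate from the very definition of $\affplap$.
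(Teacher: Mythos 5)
Your proposal is correct and follows essentially the same route as the paper: differentiate the affine energy in the direction $\psi$, exchange the $t$-derivative with the integral over $\S$, apply Fubini, recognize the inner integral as $H_f^{p-1}(\nabla f)\nabla H_f(\nabla f)$, and identify the multiplier as $\affineeigenvalue$ by testing with $\psi = f$. Your explicit domination argument via Lemma \ref{lem_pbound} to justify differentiating under the $d\xi$-integral is a welcome point of rigor that the paper's computation leaves implicit, but it does not constitute a different method.
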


\begin{proof}
	Let $f \in \W$ be a minimizer of \eqref{def_affineeigenvalue} so that for every $g \in \W$,
	
\[
{\int_{\S} \|\nabla_\xi g\|_{L^p}^{-n} d\xi} - \constE^n \affineeigenvalue^{-n/p} {\|g\|_p^{-n}} \leq 0
\]
with equality if $g=f$.

Take any fixed $\psi \in \W$ and compute the derivative on the left-hand side with respect to $\psi$.
It means to replace $g$ by $f + \varepsilon \psi$ and take derivative with respect to $\varepsilon$.

\begin{align*}
	\frac \partial {\partial \psi} & \left( \int_{\S} \|\nabla_\xi f\|_{L^p}^{-n} d\xi \right)\\
	&= -\frac np \int_{\S} \left( \int_{\Rn} |\nabla_\xi f (x)|^p dx \right)^{-\frac np -1} \int_{\Rn} p |\nabla_\xi f (x)|^{p-1} \sg(\nabla_\xi f) \nabla_\xi \psi (x) dx d \xi\\
	&= -n \int_{\S}  \left( \int_{\Rn} |\nabla_\xi f (y)|^p dy \right)^{-\frac n p -1} \int_{\Rn} \{\nabla_\xi f (x)\}^{p-1} \nabla_\xi \psi (x) dx d \xi\\
\end{align*}
where $\{x\}^p := |x|^p \sg(x)$.

Applying Fubini's theorem,

\begin{align*}
	\frac \partial {\partial \psi} & \left( \int_{\S} \|\nabla_\xi f\|_{L^p}^{-n} d\xi \right)\\
	&= -n \int_{\Rn} \int_{\S} \|\nabla_\xi f\|_p^{-n-p}\{\nabla_\xi f (x)\}^{p-1} \langle \nabla \psi (x), \xi \rangle d \xi dx\\
	&= -n \int_{\Rn} \langle \nabla \psi (x),\int_{\S} \|\nabla_\xi f\|_p^{-n-p}\{\nabla_\xi f (x)\}^{p-1} \xi d \xi \rangle dx.\\
\end{align*}

Now consider the convex body $K_f$ with support function

\[
h_{K_f}(z)^p = \int_{\S} \|\nabla_\xi f\|_p^{-n-p} |\langle z, \xi \rangle|^p d \xi.
\]
A straightforward computation gives

\[
\nabla \left( \frac 1p h_{K_f}^p \right)(z) = \int_{\S} \|\nabla_\xi f\|_p^{-n-p} \{\langle z, \xi \rangle\}^{p-1} \xi d \xi,
\]
thus we obtain

\[
\frac \partial {\partial \psi} \left( \int_{\S} \|\nabla_\xi f\|_{L^p}^{-n} d\xi \right) = n \int_{\Rn} \left\langle \nabla \psi (x),  \nabla \left( \frac 1p h_{K_f}^p \right)(\nabla f(x)) \right\rangle dx.
\]

The derivative on the right-hand side is

\begin{align*}
	\frac \partial {\partial \psi} \left[ \left( \int_{\Rn} |f(x)|^p dx \right)^{-\frac np} \right]
	&= -\frac np \left( \int_{\Rn} |f(x)|^p dx \right)^{-\frac np-1} \int_{\Rn} p\{f(x)\}^{p-1} \psi(x) dx\\
	&= -n \|f\|_p^{-(n+p)} \int_{\Rn} \{f(x)\}^{p-1} \psi(x) dx
\end{align*}
and we derive the weak formulation of \eqref{eq_ELZhang} for $f \in \W$, that is
	\begin{equation}
		\label{eq_ELZhangWeak}
		\int_{\Rn} \left\langle \nabla \psi (x),  \nabla \left( \frac 1p h_{K_f}^p \right)(\nabla f(x)) \right\rangle dx - \constE^n \affineeigenvalue^{-n/p} \|f\|_p^{-(n+p)} \int \{f(x)\}^{p-1} \psi(x) dx = 0
	\end{equation}
for all $\psi \in \W$.

If $f$ is a minimizer of $C^2$ class on $\overline{\domain}$, then integration by parts yields
\begin{align*}
	        \frac \partial {\partial \psi} & \left( \int_{\S} \|\nabla_\xi f\|_{L^p}^{-n} d\xi \right)\\
		&= n \int_{\Rn} \psi (x) \div \left( \nabla \left( \frac 1p h_{K_f}^p \right)(\nabla f(x)) \right) dx\\
		&= n \int_{\Rn} \psi (x) \Delta_{K_f}^p f(x) dx
\end{align*}
and since \eqref{eq_ELZhangWeak} holds for every $\psi \in \W$, we obtain the (classical) equation

\[
-\Delta_{K_f}^p f(x) + \constE^n \affineeigenvalue^{-n/p} \|f\|_p^{-(n+p)} |f(x)|^{p-2} f(x) = 0
\]

Finally, multiplying the above equation by
			
\[
\left( \frac{\vol(\L)}{\vball n} \right)^{-\frac pn} \frac 1{\constCentroidPol \vol(\L)}  = \constE^{-n} \E^{n+p} f
\]
where the value of the constant $\constCentroidPol$ is given in Section 2, and using the equality $\affineeigenvalue = \frac {\E^p f}{\|f\|_p^p}$, we get

\[
-\affplap f(x) + c_{n,p}^{-n} \constE^n \affineeigenvalue^{-n/p} \left( \frac{\E f}{\|f\|_p}\right)^{n+p} |f(x)|^{p-2} f(x) = 0,
\]
so that

\[
-\affplap f(x) + \affineeigenvalue |f(x)|^{p-2} f(x) = 0\ \ {\rm in}\ \ \domain.
\]

Conversely, if $f \in C^2(\overline{\domain})$ is a solution of \eqref{eq_ELZhang}, then taking $\psi = f$ as a test function, we obtain in each term

\begin{align*}
       \int_{\Rn} \left\langle \nabla f (x),  \nabla \left( \frac 1p H_f^p \right)(\nabla f(x)) \right\rangle dx
		&= \constE^{-n} \E^{n+p} f \int_{\Rn} \left\langle \nabla f (x), \int_{\S} \|\nabla_\xi f\|_p^{-n-p} \{\nabla_\xi f(x)\}^{p-1} \xi d \xi \right\rangle dx \\
		&= \constE^{-n} \E^{n+p} f \int_{\S} \|\nabla_\xi f\|_p^{-n-p} \int_{\Rn} \{\nabla_\xi f(x)\}^{p-1} \left\langle \nabla f (x), \xi \right\rangle dx d \xi \\
		&= \constE^{-n} \E^{n+p} f \int_{\S} \|\nabla_\xi f\|_p^{-n-p} \int_{\Rn} |\nabla_\xi f(x)|^{p} dx d \xi \\
		&= \constE^{-n} \E^{n+p} f \int_{\S} \|\nabla_\xi f\|_p^{-n} d \xi\\
		&= \E^{p} f
\end{align*}
and

\[
\affineeigenvalue \int_{\Rn} |f(x)|^{p-2} f(x) f(x) dx = \affineeigenvalue \|f\|_p^p,
\]	
thus $f$ must be a minimizer.
\end{proof}

\subsection{Affine invariance properties}
Here we deal with the invariance properties of $\affplap$.

\begin{proposition} \label{invariance}

Let $A \in \sl$, $f \in W^{1,p}(\Rn)$, $\lambda > 0$ and $K \subset \Rn$ be a convex body. Denote $f_A(x) = f(Ax)$ for $x \in \Rn$. Then:

\begin{itemize}

\item[(a)] $\Delta_{p,AK} f(x) = (\Delta_{p,K} f_A)(A^{-1}x)$,

\item[(b)] $\affplap f_A(x) = (\affplap f)(Ax)$,

\item[(c)] $\affplap (\lambda f) (x) = \lambda^{p-1} \affplap f(x)$.
\end{itemize}
\end{proposition}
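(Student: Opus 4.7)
The plan is to obtain (a) by a direct chain-rule computation, bootstrap to (b) by tracking the covariance of the convex bodies $L_{p,f}$ and $\Gamma_p L_{p,f}$ under linear changes of variable, and deduce (c) from the scale invariance of $G_f$ combined with the homogeneity of the support function.

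For (a), I would begin from the chain rule $\nabla f_A(y) = A^T (\nabla f)(Ay)$ together with the covariance $h_{AK}(y) = h_K(A^T y)$, which on differentiation yields $\nabla h_K(A^T w) = A^{-1}\nabla h_{AK}(w)$. Setting $w(z) = h_{AK}(\nabla f(z))^{p-1}\nabla h_{AK}(\nabla f(z))$ and unwrapping the definition of the Wulff Laplacian, one checks that the vector field whose divergence appears in $\Delta_{p,K} f_A$ equals $A^{-1} w(Ay)$. The identity then reduces to the general fact that, for any $C^1$ vector field $w$ and any invertible $A$,
\[
\div_y\bigl(A^{-1} w(A\,\cdot)\bigr)(y) = (\div_z w)(Ay),
\]
a one-line index computation using $\sum_i A_{ki}(A^{-1})_{ij} = \delta_{kj}$. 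Note that (a) holds for every $A \in \gl$; the unimodularity will only enter in (b).

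For (b), I first need to see how $G_f$ transforms under $f \mapsto f_A$ with $A \in \sl$. The substitution $z = Ax$ gives $\|\xi\|_{f_A,p} = \|A\xi\|_{f,p}$, hence $L_{p,f_A} = A^{-1} L_{p,f}$, and the volume of this body equals that of $L_{p,f}$ precisely because $|\det A|=1$. An analogous substitution in the definition of the $L_p$-centroid body shows $\Gamma_p(BK) = B\, \Gamma_p K$ for any $B \in \gl$, so $\Gamma_p L_{p,f_A} = A^{-1} \Gamma_p L_{p,f}$; combining these gives $G_{f_A} = A^{-1} G_f$. Applying (a) with ``$K$'' $= G_f$ and ``$A$'' $= A^{-1}$, and noting $(f_A)_{A^{-1}} = f$, I obtain
\[
\affplap f_A(x) = \Delta_{p, A^{-1} G_f} f_A(x) = (\Delta_{p, G_f} f)(Ax) = (\affplap f)(Ax).
\]

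Part (c) reduces to observing that $G_f$ is scale-invariant: $\|\xi\|_{\lambda f, p} = \lambda \|\xi\|_{f,p}$ yields $L_{p,\lambda f} = \lambda^{-1} L_{p,f}$, and the factor $\lambda^{-1}$ coming from $\Gamma_p(\lambda^{-1} L_{p,f})$ cancels the factor $\lambda$ produced by the normalizing constant $(\omega_n/\vol(L_{p,\lambda f}))^{1/n}$, giving $G_{\lambda f} = G_f$. Writing $\affplap(\lambda f) = \Delta_{p,G_f}(\lambda f)$, the positive $1$-homogeneity of $h_{G_f}$ together with the $0$-homogeneity of $\nabla h_{G_f}$ and $\nabla(\lambda f) = \lambda \nabla f$ extract a common factor $\lambda^{p-1}$. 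No step poses a serious obstacle; the entire argument is a careful tracking of how $L_{p,f}$ and $\Gamma_p$ transform under $\gl$-actions. The only places where a mistake would be costly are the transposition bookkeeping in (a) and the use of $\det A = 1$ in (b), without which one picks up uncancelled powers of $|\det A|$.
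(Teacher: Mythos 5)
Your proof is correct and follows essentially the same route as the paper: (a) via the covariance $h_{AK}(y)=h_K(A^Ty)$, the chain rule, and the divergence identity for linearly conjugated vector fields; (b) via the transformation laws $L_{p,f_A}=A^{-1}L_{p,f}$ and $\Gamma_p(BK)=B\,\Gamma_pK$ giving $G_{f_A}=A^{-1}G_f$ and then invoking (a); (c) via $G_{\lambda f}=G_f$ and homogeneity. Your bookkeeping (including the remark that (a) needs no unimodularity, and the identification $L_{p,f_A}=A^{-1}L_{p,f}$, which is what the paper's computation actually uses) is accurate, so nothing further is needed.
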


The proof of this result relies on the following trivial facts:
\begin{lemma} \label{invarset}
Let $A$, $f$, $f_A$ and $K$ be as above. Then:

\begin{itemize}
		
\item[(i)] $\Gamma_p (AK) = A \Gamma_p K$,

\item[(ii)] $\L[f_A] = A^T \L$,

\item[(iii)] $\L[\lambda f] = \lambda^{-1} \L$,
	
\end{itemize}
where $A^T$ denotes the transposed matrix of $A$.
\end{lemma}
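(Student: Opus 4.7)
All three identities are direct unwindings of the definitions; the condition $A \in \sl$, so that $|\det A|=1$, makes the Jacobian factors in every change of variables disappear and is what allows the statements to come out this cleanly.

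For (i), the plan is to start from the integral representation
$h^p_{\Gamma_p(AK)}(y) = (\constCentroid \vol(AK))^{-1}\int_{AK}|\langle y,z\rangle|^p\,dz,$
make the substitution $z=Aw$, and use $\vol(AK)=\vol(K)$ together with $\langle y,Aw\rangle=\langle A^T y,w\rangle$ to identify the right-hand side as $h^p_{\Gamma_p K}(A^T y)$. The elementary rule $h_{AL}(y)=h_L(A^T y)$ for the support function of a linear image of a convex body then gives $\Gamma_p(AK)=A\,\Gamma_p K$.

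For (ii), I would apply the chain rule to obtain $\nabla f_A(x)=A^T\nabla f(Ax)$, so that
$\nabla_\xi f_A(x) = \langle\nabla f(Ax),A\xi\rangle = \nabla_{A\xi} f(Ax).$
Substituting $y=Ax$ in the $L^p$ integral (which leaves $dx$ invariant) yields $\|\nabla_\xi f_A\|_p = \|\nabla_{A\xi} f\|_p$, i.e.\ $\|\xi\|_{f_A,p}=\|A\xi\|_{f,p}$; reading off the unit ball of this norm then gives the image description of $\L[f_A]$ claimed in (ii). Part (iii) is even simpler: the linearity of the gradient gives $\|\nabla_\xi(\lambda f)\|_p=\lambda\|\nabla_\xi f\|_p$, so the norm associated with $\lambda f$ is $\lambda$ times the one associated with $f$, and the unit ball scales by $\lambda^{-1}$.

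There is no serious obstacle here; the only care needed is in keeping straight the distinction between how the linear map $A$ acts on points of the domain and on the dual directions $\xi$ that parametrize the boundary of $\L$. These three identities are precisely the building blocks needed to push through Proposition \ref{invariance} and derive the affine invariance and $(p-1)$-homogeneity of $\affplap$, via the identification $\affplap f = \Delta_{p,G_f}(f)$ with $G_f$ a multiple of $\Gamma_p\L[f]$.
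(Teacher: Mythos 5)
The paper offers no argument at all for this lemma (it is labelled a collection of trivial facts), so the only thing to assess is whether your computations are sound. Parts (i) and (iii) are fine: your change of variables $z=Aw$, the cancellation of the determinant factors (which in (i) actually makes the identity valid for every $A\in\gl$, not only $A\in\sl$), and the rule $h_{AL}(y)=h_L(A^Ty)$ give (i), and the positive homogeneity of the gradient gives (iii). The flaw is in the final step of (ii). Your computation correctly yields $\|\xi\|_{f_A,p}=\|A\xi\|_{f,p}$ (this is where $\det A=1$ is genuinely needed; otherwise a factor $|\det A|^{-1/p}$ survives), but ``reading off the unit ball'' of the norm $\xi\mapsto\|A\xi\|_{f,p}$ gives $\{\xi:\,A\xi\in\L\}=A^{-1}\L$, \emph{not} $A^{T}\L$; the two sets coincide only when $A$ is orthogonal. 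So as a derivation of the identity as printed, the last step fails, and you should not paper over it with ``gives the image description claimed in (ii)''.

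In fact the printed $A^{T}$ is a misprint in the statement itself: in the proof of Proposition \ref{invariance}(b) the lemma is invoked in the form $\Gamma_p\L[f_A]=\Gamma_p(A^{-1}\L)$, i.e.\ precisely the $A^{-1}$ version, and the $A^{T}$ only survives harmlessly inside a volume, where $\vol(A^{T}\L)=\vol(A^{-1}\L)=\vol(\L)$ because $\det A=1$. To make your proof complete and usable downstream, state and prove $\L[f_A]=A^{-1}\L$ explicitly, and then verify it still delivers what Proposition \ref{invariance} needs: combining it with (i) gives $G_{f_A}=A^{-1}G_f$, and applying part (a) with the matrix $A^{-1}$ then yields $\affplap f_A(x)=(\affplap f)(Ax)$, exactly as in the paper. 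With that one correction your argument is the natural (and essentially unique) one.
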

\begin{proof}[Proof of Proposition \ref{invariance}]
	For the proof of the claim (a), we use (i) of Lemma \ref{invarset} in the computation
	
\begin{align*}
		\Delta_{p,{AK}} f(x)
		&= -\div\left( \nabla \left( \frac{h_{A K}^p}{p} \right)(\nabla f(x)) \right)\\
		&= -\div_x\left( \nabla_\xi \left( \frac{h_K(A^T \xi)^p}{p} \right)(\nabla f(x)) \right)\\
		&= -\div\left(A \nabla \left( \frac{h_{K}^p}{p} \right)(A^T \nabla f(x)) \right)\\
		&= -\div\left(A \nabla \left( \frac{h_{K}^p}{p} \right)(\nabla f_A(A^{-1} x)) \right)\\
		&= (\Delta_{p,K} f_A)(A^{-1} x),
\end{align*}
where we used the identity $\div (A V(A^{-1} x) ) = \div(V)(A^{-1}x)$ valid for any smooth vector field $V$ in $\Rn$.

For the proof of the claim (b), recall that
	
\[
G_f = {\left( \frac{\vball n}{\vol(\L)}\right)^{1/n}\Gamma_p \L[f]}.
\]
By (ii) of Lemma \ref{invarset}, we then get
	
\begin{align*}
		G_{f_A}
		&= {\left( \frac{\vball n}{\vol(\L[f_A])}\right)^{1/n}\Gamma_p \L[f_A]} = {\left( \frac{\vball n}{\vol(A^T \L)}\right)^{1/n}\Gamma_p A^{-1} \L[f]}\\
		&= {\left( \frac{\vball n}{\vol(\L)}\right)^{1/n} A^{-1} \Gamma_p \L[f]} = A^{-1} G_f,
\end{align*}
so that

\[
\affplap f_A(x) = \Delta_{p,G_{f_A}} f_A(x) = \Delta_{p,{A^{-1} G_f}} f_A(x) = (\affplap f)(A x).
\]

Finally, the claim (c) follows from the computation
\[
G_{\lambda f} = {\left( \frac{\vball n}{\vol(\L[\lambda f])}\right)^{1/n}\Gamma_p \L[\lambda f]} = {\left( \frac{\vball n}{\lambda^{-n} \vol(\L[\lambda f])}\right)^{1/n} \lambda^{-1} \Gamma_p \L[\lambda f]} = G_f,
\]
where it was used (iii) of Lemma \ref{invarset}, and

\[
\affplap {\lambda f}(x) = \Delta_{p,G_{{\lambda f}}} {\lambda f}(x) = \Delta_{p,{G_f}} {\lambda f}(x) = \lambda^{p-1} (\affplap f)(x).
\]
\end{proof}

An important property satisfied by $\affplap$ which will be invoked later is

\begin{proposition}
For any radial function $f \in W^{1,p}(\Rn)$, we have

\[
\affplap f = \Delta_p f\ \ {\rm in}\ \ \Rn.
\]
\end{proposition}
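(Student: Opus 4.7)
The plan is to identify the body $G_f$ explicitly when $f$ is radial and then observe that $\Delta_{p,G_f}$ reduces to the classical $p$-Laplacian.

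First, I would check that $L_{p,f}$ is a Euclidean ball when $f$ is radial. Since $f(Rx) = f(x)$ for every $R \in SO(n)$, the change of variables $y = R^{-1}x$ gives
\[
\|\nabla_{R\xi} f\|_p^p = \int_{\Rn} |\langle \nabla f(x), R\xi\rangle|^p\,dx = \int_{\Rn} |\langle R^T \nabla f(Ry), \xi\rangle|^p\,dy = \|\nabla_\xi f\|_p^p,
\]
because $\nabla f(Ry) = R \nabla f(y)$ for a radial $f$. Hence $\xi \mapsto \|\nabla_\xi f\|_p$ is rotation invariant, so $L_{p,f} = r\B$ for some $r > 0$.

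Next, I would use the standard normalization $\Gamma_p \B = \B$ recorded in Section 2 together with the fact that $\Gamma_p$ commutes with dilations, yielding $\Gamma_p L_{p,f} = \Gamma_p(r\B) = r\B$. Plugging this into the definition of $G_f$,
\[
G_f = \left(\frac{\vball n}{\vol(L_{p,f})}\right)^{1/n} \Gamma_p L_{p,f} = \left(\frac{\vball n}{r^n \vball n}\right)^{1/n} r\B = \B.
\]
Therefore $H_f(v) = h(G_f,v) = h_\B(v) = |v|$, and the integrand of the Wulff divergence becomes $\nabla(|v|^p/p) = |v|^{p-2}v$.

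Finally, from the identity $\affplap f = \Delta_{p,G_f} f$ established in Section 2,
\[
\affplap f(x) = -\div\!\left(|\nabla f(x)|^{p-1}\, \frac{\nabla f(x)}{|\nabla f(x)|}\right) = -\div\!\left(|\nabla f(x)|^{p-2} \nabla f(x)\right) = \Delta_p f(x),
\]
which proves the proposition. The only delicate step is verifying that $L_{p,f}$ is a ball, but this is immediate from the rotation invariance of the gradient norm for radial $f$; the rest is just bookkeeping with the normalization constants.
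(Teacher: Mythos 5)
Your proof is correct and follows essentially the same route as the paper: identify $L_{p,f}$ as a ball $r\B$, use the normalization $\Gamma_p\B=\B$ and homogeneity to get $G_f=\B$, and conclude via $\affplap f=\Delta_{p,G_f}f=\Delta_{p,\B}f=\Delta_p f$. The only difference is that you spell out the rotation-invariance and dilation steps the paper treats as immediate.
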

\begin{proof}
	Since $f$ is radial, clearly $\L = r \B$ is a ball and
	\[
		G_f = {\left( \frac{\vball n}{\vol(r\B)}\right)^{1/n}\Gamma_p (r\B)} = \B.
	\]
	In this case, $\affplap f = \Delta_{p,G_f} f = \Delta_{p,\B} f$ in $\Rn$, where the latter is the usual $p$-Laplace operator.
\end{proof}

\subsection{Regularity properties and proof of Theorem \ref{thm_ELZhang}}

From the point of view of PDEs analysis, it is essential to know some regularization fine theory satisfied by $\affplap$. The next result collects some of its main smoothing properties.

\begin{proposition} \label{regularity}
	Let $\domain \subseteq \R^n$ be a bounded open set and $p > 1$. Let $f_0 \in \W$ be a weak solution of the problem

 \begin{equation} \label{P1}
\left\{
\begin{array}{rrll}
\affplap f_0 &=& h_0 & {\rm in} \ \ \domain, \\

	f_0 &=& 0 & {\rm on} \ \ \boundary{\domain},
\end{array}\right.
\end{equation}
where $h_0: \domain \rightarrow \R$ is a measurable function. Then, it holds that:

\begin{itemize}
\item[(a)] if $p < n$ and $h_0 \in L^{n/p}(\domain)$, then $f_0 \in L^s(\domain)$ for every $s \geq 1$;
\item[(b)] if $p \leq n$ and $h_0 \in L^q(\domain)$ for some $q > n/p$, then $f_0 \in L^\infty(\domain)$;
\item[(c)] if $h_0 \in L^\infty(\domain)$, then $f_0 \in C^{1, \alpha}(\domain)$ for some $0 < \alpha < 1$;
\item[(d)] if $h_0 \in L^\infty(\domain)$, then $f_0 \in C^{1, \alpha}(\overline{\domain})$ for some $0 < \alpha < 1$ provided that $\partial \domain$ is of $C^{2,\alpha}$ class.
\end{itemize}	
\end{proposition}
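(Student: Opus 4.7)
The plan is to exploit the observation that, once the solution $f_0$ is fixed, the nonlocal operator $\affplap$ reduces to a purely \emph{local} quasilinear operator of Wulff type with frozen coefficients. Indeed, $G_{f_0}$ is then a fixed convex body and the equation $\affplap f_0 = h_0$ reads
\[
-\div\bigl(A(\nabla f_0)\bigr) = h_0, \qquad A(v) := H_{f_0}(v)^{p-1}\, \nabla H_{f_0}(v) = \nabla\bigl(\tfrac{1}{p} H_{f_0}^p\bigr)(v),
\]
with a \emph{fixed} vector field $A$. Once this reduction is in place, the four assertions will follow from the standard quasilinear regularity theory applied to a $p$-Laplace-type divergence-form operator with the same structural constants as the usual $p$-Laplacian.

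My first step will be to verify the $p$-structure conditions for $A$. Since $f_0\not\equiv 0$ lies in $\W$ and $\domain$ is bounded, Lemma~\ref{lem_pbound} yields the uniform lower bound $\|\nabla_\xi f_0\|_p \geq \constTal \|f_0\|_p \w(\domain,\xi)^{-1} > 0$, while a trivial upper bound is $\|\nabla_\xi f_0\|_p \leq \gnorm{f_0}$. Hence $\L[f_0]$ is a bounded centred convex body with the origin in its interior, and the same is true of its $L^p$-centroid body $G_{f_0}$. Consequently, there exist constants $0<c\leq C$ with $c|v|\leq H_{f_0}(v)\leq C|v|$. Using the $1$-homogeneity identity $\nabla H_{f_0}(v)\cdot v = H_{f_0}(v)$, I would then read off $A(v)\cdot v = H_{f_0}(v)^p \geq c^p|v|^p$ and $|A(v)|\leq C^p|v|^{p-1}$. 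The explicit integral representation of $h_{\Gamma_p \L[f_0]}^p$ shows that $H_{f_0}^p$ is $C^2$ on $\R^n\setminus\{0\}$ for every $p>1$, so $A\in C^1(\R^n\setminus\{0\})$ with the two-sided ellipticity bound $\lambda |v|^{p-2}|\xi|^2 \leq \partial_v A(v)\xi\cdot\xi \leq \Lambda |v|^{p-2}|\xi|^2$.

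With these structural conditions in hand, each claim follows by quoting a standard regularity theorem for $-\div(A(\nabla u)) = h_0$. Moser iteration on the weak formulation gives (a) when $h_0\in L^{n/p}(\domain)$; the De Giorgi--Stampacchia truncation argument gives the $L^\infty$ bound of (b) when $h_0\in L^q(\domain)$ with $q>n/p$; the interior $C^{1,\alpha}$ regularity of DiBenedetto, Tolksdorf and Uhlenbeck \cite{dibenedetto1982c,tolksdorf1984regularity,uhlenbeck1977regularity} gives (c); and Lieberman's boundary $C^{1,\alpha}$ estimates \cite{lieberman1988boundary} upgrade this to $C^{1,\alpha}(\overline{\domain})$ under the $C^{2,\alpha}$ assumption on $\partial\domain$, settling (d).

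The hard part will not be any of these invocations themselves, but rather the second step: confirming the smoothness and ellipticity of $H_{f_0}$ in the range $1<p<2$, where the second $v$-derivative of the integrand $|\langle v,z\rangle|^p$ defining $h_{\Gamma_p \L[f_0]}^p$ is singular on the hyperplane $\{\langle v,\cdot\rangle=0\}$. One must verify, via dominated convergence applied to integrals over $\L[f_0]$, that $h_{\Gamma_p \L[f_0]}^p$ is nonetheless $C^2$ away from the origin and that its Hessian retains the $|v|^{p-2}$-type lower bound needed to put $A$ into the framework of the classical theorems above.
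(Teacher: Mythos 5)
Your proposal follows essentially the same route as the paper: freeze $f_0$ so that $\affplap f_0$ becomes a local Wulff-type operator $-\div\bigl(a(\nabla f_0)\bigr)$ with $a=\nabla\bigl(\tfrac1p H_{f_0}^p\bigr)$, verify Tolksdorf's structural conditions using the two-sided bounds $\constTal\|f_0\|_p\,\w(\domain,\xi)^{-1}\leq\|\nabla_\xi f_0\|_p\leq\gnorm{f_0}$ from Lemma~\ref{lem_pbound}, and then invoke De Giorgi--Nash--Moser for (a)--(b) and the DiBenedetto/Tolksdorf/Lieberman $C^{1,\alpha}$ theory for (c)--(d). The one step you flag as open, the $C^2$ smoothness and $|v|^{p-2}$-type ellipticity of $H_{f_0}$ when $1<p<2$, is handled in the paper exactly along the lines you suggest: differentiating the spherical representation of $H_{f_0}^p$ gives $\partial a_i/\partial v_j=(p-1)\int_{\S}\|\nabla_\xi f_0\|_p^{-n-p}|\langle\xi,v\rangle|^{p-2}\xi_i\xi_j\,d\xi$, whose kernel is integrable on $\S$ precisely because $p>1$, and the $C^1(\R^n)\cap C^2(\R^n\setminus\{0\})$ regularity of $H_{f_0}$ is quoted from Haberl--Schuster.
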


\begin{proof}
It suffices to assume $f_0 \neq 0$. Consider the field $a = (a_1, \ldots, a_n) : \R^n \rightarrow \R^n$ given by $a(v) = H_{f_0}^{p-1}(v) \nabla H_{f_0}(v)$ and the related differential operator ${\mathcal L} = {\mathcal L}_{p, f_0}$ on $\W$ defined by

\[
{\mathcal L} f = {\mathcal L}_{p, f_0} f := -{\rm div} \, a(\nabla f).
\]	
Note that ${\mathcal L} f_0$ is a multiple of $\affplap f_0$ by a positive number. Using that $f_0$ is a nonzero function, we show below that ${\mathcal L}$ satisfies the well-known Tolksdorf's structural conditions for quasilinear elliptic operators \cite{tolksdorf1984regularity}:

\begin{itemize}
\item[(T.1)] $\sum^n_{i,j=1} \frac{\partial a_i(v)}{\partial v_j} \eta_i \eta_j \geq C_1 |v|^{p-2} |\eta|^2$;
\item[(T.2)] $\sum^n_{i,j=1} \left| \frac{\partial a_i(v)}{\partial v_j} \right| \leq C_2 |v|^{p-2}$
\end{itemize}
for every $v \in \R^n \setminus \{0\}$ and $\eta \in \R^n$, where $C_1$ and $C_2$ are positive constants independent of $v$ and $\eta$. In particular, ${\mathcal L}$ is a Wulff type degenerate quasilinear elliptic operator.

In fact, by Lemma \ref{lem_pbound} and Cauchy-Schwarz inequality, we derive the lower and upper estimates

\[
D \|f_0\|_p \leq \|\nabla_\xi f_0\|_p \leq \|\nabla f_0\|_p
\]
for every $\xi \in \S$, where $D$ is positive constant. Under the above inequalities, Haberl and Schuster proved (see Lemma 4.1 of \cite{haberl2009general}) that the function $H_{f_0}$ belongs to $C^1(\R^n) \cap C^2(\R^n \setminus \{0\})$. Notice also that, for any $v \in \R^n$,

\[
a_i(v) = \frac{\partial}{\partial v_i} \left( \frac1p H^p_{f_0}(v) \right) = \int_{\S} \|\nabla_\xi f_0\|^{-n-p}_p |\langle \xi, v \rangle|^{p-2} \langle \xi, v \rangle \xi_i d\xi,
\]
and for $v \in \R^n \setminus \{0\}$,

\[
\frac{\partial a_i(v)}{\partial v_j} = (p-1) \int_{\S} \|\nabla_\xi f_0\|^{-n-p}_p |\langle \xi, v \rangle|^{p-2} \xi_i \xi_j d\xi.
\]
As a direct consequence of the latter, for any $v \in \R^n \setminus \{0\}$ and $\eta \in \R^n$, we have

\[
\sum^n_{i,j=1} \frac{\partial a_i(v)}{\partial v_j} \eta_i \eta_j = (p-1) \int_{\S} \|\nabla_\xi f_0\|^{-n-p}_p |\langle \xi, v \rangle|^{p-2} \langle \xi, \eta \rangle^2 d\xi.
\]
Plugging the inequality $\|\nabla_\xi f_0\|_p \leq \|\nabla f_0\|_p$ on the above right-hand side, we derive

\begin{eqnarray*}
\sum^n_{i,j=1} \frac{\partial a_i(v)}{\partial v_j} \eta_i \eta_j &\geq& (p-1) \|\nabla f_0\|^{-n-p}_p \int_{\S} |\langle \xi, v \rangle|^{p-2} \langle \xi, \eta \rangle^2 d\xi \\
&\geq& C_1 |v|^{p-2} |\eta|^2,
\end{eqnarray*}
where

\[
C_1:= (p-1) \|\nabla f_0\|^{-n-p}_p \min_{v \in \S} \int_{\S} |\langle \xi, v \rangle|^{p-2} |\xi_1|^2 d\xi.
\]
Note that $C_1$ is finite and positive, since $p > 1$. This proves the condition (T.1).

For the proof of the condition (T.2), by using the lower estimate $\|\nabla_\xi f_0\|_p \geq D \|f_0\|_p$ for every $\xi \in \S$, we get

\begin{eqnarray*}
\sum^n_{i,j=1} \left| \frac{\partial a_i(v)}{\partial v_j} \right| &\leq& (p-1) \sum^n_{i,j=1} \int_{\S} \|\nabla_\xi f_0\|^{-n-p}_p |\langle \xi, v \rangle|^{p-2} |\xi_i| |\xi_j| d\xi \\
&\leq& n(p-1) \int_{\S} \|\nabla_\xi f_0\|^{-n-p}_p |\langle \xi, v \rangle|^{p-2} d\xi \\
&\leq& n(p-1) D^{-n-p} \|f_0\|^{-n-p}_p \int_{\S} |\langle \xi, v \rangle|^{p-2} d\xi \\
&=& C_2 |v|^{p-2},
\end{eqnarray*}
where

\[
C_2:= n(p-1) D^{-n-p} \|f_0\|^{-n-p}_p \int_{\S} |\xi_1|^{p-2} d\xi.
\]
Again, once $p > 1$, the constant $C_2$ is finite and positive.

Now, thanks to (T.1) and (T.2), the proof of the claims (a) and (b) follows from arguments based on the De Giorgi-Nash-Moser's iterative scheme, developed by De Giorgi \cite{de1960sulla} for elliptic equations and, independently, by Moser \cite{moser1960new} and Nash \cite{nash1957parabolic} for parabolic equations, see \cite{lieberman1988boundary} for more details. Finally, the claims (c) and (d) follow from (b) and $C^{1,\alpha}$ regularity results of \cite{dibenedetto1982c, lieberman1988boundary, tolksdorf1984regularity}. This ends the proof.
\end{proof}

The proof of the smoothness of minimizers bases on Proposition \ref{regularity} and the following result:

\begin{proposition} \label{smoothness}
Let $\domain \subseteq \R^n$ be a bounded open set and $p > 1$. Let $f_0 \in \W$ be a weak solution of the problem

 \begin{equation} \label{P2}
\left\{
\begin{array}{rrll}
\affplap f_0 &=& \rho(x) |f_0|^{p-2} f_0 & {\rm in} \ \ \domain, \\
	f_0 &=& 0 & {\rm on} \ \ \boundary{\domain},
\end{array}\right.
\end{equation}
where $\rho: \domain \rightarrow \R$ is a weight function. If $\rho \in L^{n/p}(\domain)$, then $f_0 \in L^s(\domain)$ for every $s \geq 1$.
\end{proposition}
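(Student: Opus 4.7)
The plan is a Moser iteration on $u := |f_0|$, powered by the uniform ellipticity of $\affplap$ already verified in the proof of Proposition \ref{regularity} (conditions (T.1)--(T.2)). Assuming $f_0 \not\equiv 0$ and setting $a(v) := H_{f_0}^{p-1}(v)\nabla H_{f_0}(v)$, I first record the Euler identity
\[
a(v) \cdot v = H_{f_0}^p(v),
\]
which follows because $H_{f_0}$ is positively $1$-homogeneous. Combined with $\|\nabla_\xi f_0\|_p \leq \|\nabla f_0\|_p$ inserted into \eqref{def_Hf}, this yields $a(\nabla f_0) \cdot \nabla f_0 \geq c_0 |\nabla f_0|^p$ for some $c_0 > 0$ depending only on $\|\nabla f_0\|_p$, $\E f_0$, $n$ and $p$.

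Second, for $M > 0$ and $\beta \geq 1$, I would test \eqref{P2} with $\psi := f_0\, \min(|f_0|,M)^{p(\beta-1)} \in \W$. Expanding $\nabla \psi$ by the chain rule and applying the ellipticity from step one, the left-hand side dominates a positive multiple of $\int_\domain |\nabla v_M|^p\, dx$ where $v_M := |f_0|\, \min(|f_0|,M)^{\beta-1} \in W_0^{1,p}(\domain)$, while the right-hand side is bounded by $\int_\domain |\rho|\, v_M^p\, dx$. If $p \geq n$ then $\W$ embeds into every $L^s$ and the conclusion is immediate, so I would assume $p < n$ and invoke the Sobolev inequality on $v_M$.

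The main obstacle is the next step: the exponent $n/p$ is the scale-critical exponent for this iteration, so the bare H\"older bound $\int |\rho|\, v_M^p \leq \|\rho\|_{n/p}\, \|v_M\|_{p^*}^p$ has the same scaling as the Sobolev quantity controlling the left-hand side, preventing direct absorption. I would bypass this with the standard absolute-continuity splitting: for every $\eta > 0$ write $\rho = \rho^{(1)}_\eta + \rho^{(2)}_\eta$ with $\|\rho^{(1)}_\eta\|_{n/p} < \eta$ and $\|\rho^{(2)}_\eta\|_\infty \leq A_\eta$, which is possible because $|\rho|^{n/p}$ is integrable. Choosing $\eta$ small enough compared to the explicit (polynomial in $\beta$) coefficient in front of $\int|\nabla v_M|^p$, the $\rho^{(1)}_\eta$ contribution is absorbed into the left-hand side via Sobolev, leaving an estimate
\[
\|v_M\|_{p^*}^p \leq C(\beta) \int_\domain v_M^p\, dx.
\]

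Letting $M \to \infty$ by monotone convergence gives $\||f_0|^\beta\|_{p^*}^p \leq C(\beta)\, \||f_0|^\beta\|_p^p$ for every $\beta \geq 1$ for which the right-hand side is finite. Starting from $\beta_0 = 1$ (finite because $f_0 \in L^p(\domain)$) and iterating $\beta_{k+1} = \beta_k\, n/(n-p)$ produces $\beta_k \to \infty$ with $\|f_0\|_{p\beta_k} < \infty$ at each step, hence $f_0 \in L^s(\domain)$ for every $s \geq 1$. The only nontrivial point is the critical-exponent absorption; otherwise the iteration is a textbook Moser argument carried over to the nonlocal operator $\affplap$ via the structural bounds already established.
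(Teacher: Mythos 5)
Your proposal is correct, but it follows a more self-contained route than the paper. The paper's own proof is very short: it freezes the operator by setting $a(v)=H_{f_0}^{p-1}(v)\nabla H_{f_0}(v)$, observes that the resulting quasilinear operator ${\mathcal L}f=-\div\,a(\nabla f)$ satisfies the Tolksdorf structural conditions (T.1)--(T.2) already verified in the proof of Proposition \ref{regularity}, and then simply cites Proposition 1.2 of \cite{guedda1989quasilinear}, which gives exactly the $L^s$ conclusion for weights $\rho\in L^{n/p}$. What you do instead is essentially reprove that cited result in line: your Euler identity $a(v)\cdot v=H_{f_0}^p(v)$ together with $\|\nabla_\xi f_0\|_p\le\|\nabla f_0\|_p$ gives the coercivity $a(\nabla f_0)\cdot\nabla f_0\ge c_0|\nabla f_0|^p$ (the same ellipticity input the paper extracts as (T.1)), and the truncated test functions $f_0\min(|f_0|,M)^{p(\beta-1)}$, the Brezis--Kato-type splitting of $\rho$ into a small-$L^{n/p}$ piece plus a bounded piece to handle the critical exponent, and the bootstrap $\beta_{k+1}=\beta_k n/(n-p)$ constitute the standard Moser iteration behind that reference. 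The details you flag are handled correctly: the test function lies in $\W$, the right-hand side is finite a priori by H\"older with exponents $n/p$ and $n/(n-p)$, and it is harmless that the constant $C(\beta)$ (through $A_\eta$) depends on $\beta$, since only finiteness at each stage is needed, not an $L^\infty$ bound. So your argument buys a proof independent of \cite{guedda1989quasilinear}, at the cost of length; the paper's buys brevity by outsourcing the iteration, and both rest on the same structural bounds for $H_{f_0}$ established in Proposition \ref{regularity}.
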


\begin{proof}
Using the strategy of the previous proof which consists in introducing the quasilinear elliptic operator ${\mathcal L} = {\mathcal L}_{p, f_0}$ on $\W$ for fixed $f_0$, we rewrite \eqref{P2} as

\[
\left\{
\begin{array}{rrll}
{\mathcal L} f_0 &=& \rho(x) |f_0|^{p-2} f_0 & {\rm in} \ \ \domain, \\
	f_0 &=& 0 & {\rm on} \ \ \boundary{\domain},
\end{array}\right.
\]
where ${\mathcal L}$ satisfies the conditions (T.1) and (T.2) as already before proved. By applying now Proposition 1.2 of \cite{guedda1989quasilinear}, the conclusion of the statement follows.

\end{proof}

We conclude this subsection with the

\begin{proof}[Proof of Theorem \ref{thm_ELZhang}]

In Theorem \ref{thm_ELequation}, it was shown that a function $f^{\mathcal A}_p \in \W$ minimizes of the best \Poincare constant $\affineeigenvalue$ if, and only if, it is a weak solution of the eigenvalue problem

\[
\left\{
\begin{array}{rrll}
\affplap f &=& \affineeigenvalue |f|^{p-2} f & {\rm in} \ \ \domain, \\
	f &=& 0 & {\rm on} \ \ \boundary{\domain},
\end{array}\right.
\]
This easily implies that $\affineeigenvalue$ is the smallest among all eigenvalues of the operator $\affplap$ on $\W$.

Now we establish the smoothness of the minimizer $f_0 = f^{\mathcal A}_p$. By the above observation, $f_0$ is a weak solution of \eqref{P2} in $\W$ when we take $\rho$ as the constant function $\rho(x) = \affineeigenvalue$. Then, by Proposition \ref{smoothness}, we deduce that $f_0 \in \bigcap_{s \geq 1} L^s(\domain)$. Choosing then $h_0 = \rho(x) |f_0|^{p-2} f_0 = \affineeigenvalue |f_0|^{p-2} f_0$ in \eqref{P2}, by Proposition \ref{regularity}, we conclude that $f_0$ is a bounded function in $C^{1,\alpha}(\domain)$ for arbitrary $\domain$ and in $C^{1,\alpha}(\overline{\domain})$ if $\domain$ has boundary of $C^{2,\alpha}$ class.

Finally, the above smoothness property is crucial in showing that the minimizer $f_0 = f^{\mathcal A}_p$ has defined sign. In fact, since $R^{{\mathcal A}}_p(f) = R^{{\mathcal A}}_p(|f|)$ for every $f \in \W$, then $f_0 = |f^{\mathcal A}_p|$ belongs to $\W$ and is also a minimizer (or eigenfunction) of $\affineeigenvalue$. Thus, by the previous conclusion, $f_0$ is a non-negative function in $C^{1,\alpha}(\domain)$ such that ${\mathcal L} f_0 = \affineeigenvalue f_0^{p-1} \geq 0$ in $\domain$ in the weak sense, where ${\mathcal L} = {\mathcal L}_{p, f_0}$ is defined in the proof of Proposition \ref{regularity}. Invoking the strong maximum principle for $C^1$ super-solutions of quasilinear elliptic equations involving operators of type ${\mathcal L}$ (see for example \cite{pucci2007j}), one concludes that $f_0 > 0$ in $\domain$ since $f_0$ is nonzero, and so we complete the proof. 	
\end{proof}

\section{The affine \FK inequality}
\label{sec_affineFK}

This section is devoted to the proof of the affine version of the \FK inequality stated in Theorem \ref{thm_affineFK}.

\begin{proof}[Proof of Theorem \ref{thm_affineFK}]
	
We recall here that $\Omega^*$ denotes the closed ball centered at the origin with same Lebesgue measure as $\Omega$. We divide the proof into two cases.

\underline {The case $p=1$:}

We begin with a simple computation. Take any $r > 0$ and let $\chi_{r \B}$ be the characteristic function of the ball $r \B$. Define
	
\[
k_r = \frac {\gnorm[1]{\chi_{r \B}}}{\|\chi_{r \B}\|_1} = \frac {S(r \B)}{\vol(r \B)}
\]
in the $\BV[r \B]$ sense, where $S$ denotes surface area. Clearly, we have

\[
k_r = \frac nr = \frac{n \vball n^{\frac 1n}}{\vol(r \B)^{\frac 1n}},
\]
and since $\chi_{r \B}$ can be approximated by smooth functions with compact support inside $r \B$ where the Rayleigh quotient converges to $k_r$, we deduce that
	
\[
\affineeigenone[\domain^*] \leq \classicaleigenone[\domain^*] \leq \frac{n \vball n^{\frac 1n}}{\vol(\domain^*)^{\frac 1n}}.
\]
Now let $f \in \BV$ be a minimizer of $\affineeigenone$. By \Holder's inequality and the \SZ inequality for $p=1$, we have

\begin{equation}
		\label{ineq_FKp1}
		\affineeigenone = \frac{\E[1] f}{\|f\|_1} \geq \frac{\E[1] f}{\|f\|_{\frac n{n-1}} \vol(\domain)^{\frac 1n}} \geq \frac{n \vball n^{\frac 1n}} {\vol(\domain)^{\frac 1n}} = \frac{n \vball n^{\frac 1n}} { \vol(\domain^*)^{\frac 1n}} \geq \affineeigenone[\domain^*].
\end{equation}
For the equality case, if $\affineeigenone = \affineeigenone[\domain^*]$ we have equality in all inequalities of \eqref{ineq_FKp1}. In particular, the equality case of the \SZ inequality implies that $f$ is the characteristic function of an ellipsoid $\mathbb E$. Finally, the equality case for the \Holder inequality implies $\domain = \mathbb E$.

\underline {The case $p>1$:}

We consider the affine \PZ principle (Proposition \ref{thm_affinePZ} in the section 2). Let $f \in \W$ be a minimizer of $\affineeigenvalue$. Then, $f^* \in \W[\domain^*]$ and
	
\[
\affineeigenvalue = \frac { \E^p f }{ \|f\|_p^p } \geq \frac { \E^p f^* }{ \|f^*\|_p^p } \geq \affineeigenvalue[\domain^*].
\]
For the equality case, if $\affineeigenvalue = \affineeigenvalue[\domain^*]$ then the above inequalities become equalities, so that $f^*$ is a minimizer of $\affineeigenvalue[\domain^*]$. Since $f^*$ is radial, we have $\E f^* = \|\nabla f^*\|_p$ and
	
\[
\classicaleigenvalue[\domain^*] \geq \affineeigenvalue[\domain^*] = \frac{\E^p f^*}{\|f^*\|_p^p} = \frac{\gnorm{f^*}^p}{\|f^*\|_p^p},
\]
so that $f^*$ is also a minimizer of $\classicaleigenvalue[\domain^*]$. Since $f^*$ is a first positive eigenfunction of the $p$-Laplace operator on the ball $\domain^*$, then $f^*$ is radially strictly decreasing. This fact along with the equality $\E f = \E f^*$, by the \BZ theorem in Proposition \ref{thm_affinePZ} applied to $f$, implies that $f(x) = F(|Ax|_2)$ for some smooth function $F: \R \rightarrow \R$ and an invertible matrix $A$.

It remains to show that $A\domain$ is a ball. Let us assume that $\det(A) = 1$ and set $f_A(x) = f(A^{-1}x)$. Then, $f_A$ is radial, so
	
\[
\frac {\gnorm{f_A}^p}{\|f_A\|_p^p} = \frac {\E^p f_A}{\|f_A\|_p^p} = \affineeigenvalue[A \domain] \leq \classicaleigenvalue[A \domain],
\]
thus $f_A$ is a minimizer of $\classicaleigenvalue[A \domain]$. Besides, since $f^*$ is a minimizer of $\classicaleigenvalue[\domain^*]$, we have
	
\[
\classicaleigenvalue[A \domain] = \frac {\E^p f_A}{\|f_A\|_p^p} = \frac {\E^p f^*}{\|f^*\|_p^p} = \frac {\gnorm{f^*}^p}{\|f^*\|_p^p} = \classicaleigenvalue[\domain^*].
\]
Finally, the \FK inequality for the $p$-Laplace operator shows that $A \domain$ must be a ball. This concludes the proof.
\end{proof}

\section{Comparison of eigenvalues}
\label{sec_comparison}

In this section we prove Theorems \ref{thm_Lambdaproperties} and \ref{thm_equalitycase} that relate the affine and classical eigenvalues $\affineeigenvalue$ and $\classicaleigenvalue$.
Lastly, we prove Theorem \ref{thm_affineCheegerSet} showing the existence of affine Cheeger sets.

For the proof of Theorem \ref{thm_Lambdaproperties} we recall an interesting comparison result proved by Huang and Li.
\begin{proposition}[Theorem 1.2 of \cite{huang2016optimal}]
	\label{thm_chino}
Let $p \geq 1$. For any $f \in W^{1,p}(\R^n)$, we have
	
\[
\constchino \min_{A \in \sl} \gnorm{f_A} \leq \E f,
\]
where
	
\[
\constchino = \frac{\pi^{\frac 1{2p} + \frac 12} \Gamma(\frac{n+p}2)^\frac 1p \Gamma( 1 + \frac np)^{\frac 1n}}{2^{\frac 1p + 1} \Gamma(1+\frac n2)^{\frac 1n + \frac 1p} \Gamma(\frac{p+1}2)^{\frac 1p} \Gamma(1+\frac 1p)}.
\]
	
\end{proposition}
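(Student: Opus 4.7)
My proof plan exploits the fact (recorded in Subsection 4.2) that $\E f$ is invariant under $f \mapsto f_A$ for $A \in \sl$, while $\gnorm{f_A}$ depends nontrivially on $A$. The whole task then reduces to a purely convex-geometric comparison for the symmetric convex body $K := \L[f]^\circ \subset \Rn$, whose support function is $h_K(\xi) = \|\nabla_\xi f\|_p$.

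I begin by recording two identities. Applying $\int_{\S} |\langle v,\xi\rangle|^p\, d\xi = n\vball n\, \constAuxRZ\, |v|^p$ with $v = \nabla f(x)$ and swapping the order of integration yields
\begin{equation*}
\gnorm{f}^p = \frac{1}{n\vball n\, \constAuxRZ}\int_{\S} h_K(\xi)^p\, d\xi,
\end{equation*}
while \eqref{eq_relE_Lf} combined with \eqref{pre_polares} shows that $\E^{-n} f$ equals a fixed dimensional constant times $\vol(\L[f]) = n^{-1}\int_\S h_K(\xi)^{-n}\, d\xi$. Since $\nabla f_A(x) = A^T \nabla f(Ax)$, a direct change of variables gives $h_{K[f_A]}(\xi) = h_K(A\xi)$, and $\vol(\L[f])$ is preserved when $\det A = 1$. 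Consequently the inequality becomes the convex-geometric claim: for every origin-symmetric convex body $K \subset \Rn$,
\begin{equation*}
\min_{A \in \sl}\int_{\S} h_K(A\xi)^p\, d\xi \;\leq\; D(n,p)\, \vol(K^\circ)^{-p/n},
\end{equation*}
for an explicit dimensional constant $D(n,p)$ tracking $\constchino$.

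The plan is to attack this via a variational argument. Coercivity of $A \mapsto \int_\S h_K(A\xi)^p\, d\xi$ on $\sl$ follows from the lower bound $h_K(\xi) \geq c|\xi|$ provided by Lemma \ref{lem_pbound}, so a minimizer exists; after a rotation we may take it to be $A = I$. The Lagrange multiplier condition along the constraint $\det A = 1$ then forces the isotropy identity
\begin{equation*}
\int_{\S} h_K(\xi)^{p-1}\,\nabla h_K(\xi) \otimes \xi\, d\xi = \lambda I
\end{equation*}
for some $\lambda > 0$. With $K$ in this optimal position, I would combine the \BS inequality \eqref{pre_blaschkesantalo} with the $L^p$-Busemann-Petty centroid inequality \eqref{pre_BPineq} to bound $\int_\S h_K^p\, d\xi$ by $\vol(K^\circ)^{-p/n}$, with equality precisely on centered ellipsoids. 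The case $p=2$ is transparent: since $h_K(\xi)^2 = \xi^T N\xi$ for $N_{ij} = \int \partial_i f\, \partial_j f\, dx$ is automatically a quadratic form, $K$ is always an ellipsoid, and the minimum of $\int_\S h_K(A\xi)^2\, d\xi = \vball n\,\mathrm{tr}(N A A^T)$ over $A \in \sl$ equals $n\vball n\,\det(N)^{1/n}$ by AM-GM on eigenvalues. Plugging back, one verifies $\min_A \gnorm{f_A}^2 = \E^2 f$, in agreement with $\constchino|_{p=2} = 1$ predicted by the formula.

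The main obstacle is the step for $p \neq 2$: extracting the sharp constant $\constchino$ requires correctly identifying the extremal affine position of $K$ and evaluating the extremum. The Gaussian flavor of $\constchino$ (featuring $\Gamma(\tfrac{n+p}{2})$, $\Gamma(\tfrac{p+1}{2})$, and $\Gamma(1+\tfrac{n}{p})$) strongly suggests the proof passes through $L^p$-moments of the Euclidean norm on $K$, equivalently through Gaussian integration, which is precisely where the Lutwak--Yang--Zhang $L^p$-John/centroid machinery enters. Careful bookkeeping of the constants coming from $\constE$, $\constAuxRZ$, \BS, and \eqref{pre_BPineq} should then produce the explicit value of $\constchino$.
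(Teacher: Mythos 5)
You should first note that the paper does not prove this proposition at all: it is quoted as Theorem 1.2 of Huang--Li \cite{huang2016optimal}, so your attempt has to stand on its own as a proof of that external result. The parts of your reduction that are correct are genuinely correct: $\E f$ is invariant under $f\mapsto f_A$ for $A\in\sl$, one has $h_{\L[f_A]^\circ}(\xi)=h_{\L^\circ}(A\xi)$ and $\gnorm{f_A}^p=(n\vball n\,\constAuxRZ)^{-1}\int_\S h_K(A\xi)^p\,d\xi$ with $K=\L^\circ$, so the statement is equivalent to bounding $\min_{A\in\sl}\int_\S h_K(A\xi)^p d\xi$ by a constant times $\vol(K^\circ)^{-p/n}$; and your $p=2$ computation via the matrix $N=\int\nabla f\otimes\nabla f\,dx$ is complete and consistent with $\constchino=1$ at $p=2$.

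For $p\neq 2$, however, nothing beyond this reduction is actually proved, and that is where the entire content of the theorem lies. The finishing step you describe -- put $K$ in the critical $\sl$-position given by the Lagrange isotropy identity and then ``combine'' Blaschke--Santal\'o \eqref{pre_blaschkesantalo} with the $L^p$ Busemann--Petty centroid inequality \eqref{pre_BPineq} -- is not an argument: \eqref{pre_BPineq} compares $\vol(\Gamma_p L)$ with $\vol(L)$ for some body $L$, and you never exhibit a body $L$, nor a chain of identities linking $\vol(\Gamma_p L)$ to $\int_\S h_K(\xi)^p d\xi$ and to $\vol(K^\circ)$; the isotropy condition you derive is never used, and no mechanism in your sketch produces the explicit value of $\constchino$ (which, note, is $<1$ already for $n=2$, $p=1$, while radial $f$ give ratio $1$, so the constant cannot simply be read off an equality case of your extremal-position analysis). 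Moreover you pose the convex-geometric claim for arbitrary origin-symmetric $K$, whereas the only structural fact making such a bound accessible with an explicit constant is that here $h_K(\xi)^p=\int_{\Rn}|\langle\nabla f(x),\xi\rangle|^p dx$, i.e. $K$ is (up to normalization) the $L_p$ projection body of the Lutwak--Yang--Zhang body of $f$; the route in the literature identifies $\min_{A\in\sl}\gnorm{f_A}$ through the $L_p$ John ellipsoid of that body (see \cite{lutwak2006optimal}) and then applies sharp $L_p$ affine isoperimetric inequalities such as the $L_p$ Petty projection inequality -- machinery entirely absent from your plan. A smaller issue: your coercivity argument for existence of a minimizing $A$ relies on Lemma \ref{lem_pbound}, which requires $f$ to be supported in a bounded set, while the proposition concerns all $f\in W^{1,p}(\Rn)$. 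As it stands, then, the proposal is a plausible reduction plus an acknowledged open core, not a proof.
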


\begin{proof}[Proof of Theorem \ref{thm_Lambdaproperties}]

For the proof of (a), take a minimizer $f \in \W$ of $\classicaleigenvalue$. The inequality $\E f \leq \gnorm{f}$ then implies
	
\[
\classicaleigenvalue = \frac {\gnorm{f}^p }{ \|f\|_p^p } \geq \frac {\E^p f }{ \|f\|_p^p } \geq \affineeigenvalue.
\]	

For the proof of (b), consider a minimizer $f$ of $\affineeigenvalue$. The inequality \eqref{ineq_reversezhang} leads us to

\[
\affineeigenvalue = \frac { \E f^p }{ \|f\|_p^p } \geq \frac {\constRZ^p \|f\|_p^{p \frac{n-1}n} \gnorm{f}^{p/n} }{ \|f\|_p^p } \geq \constRZ^p \classicaleigenvalue^{1/n}.
\]
		
Finally, we prove the assertion (c). Let $f$ be a minimizer of $\affineeigenvalue$. By Proposition \ref{thm_chino}, there exists a matrix $A_0 \in \sl$ such that $f_{A_0}(x) = f(A_0^{-1} x)$ satisfies $\E f \geq \constchino \gnorm{f_{A_0}}$. Then,

\[
\constchino \min_{A \in \sl} \classicaleigenvalue[A \domain] \leq \constchino \classicaleigenvalue[A_0 \domain] \leq \constchino \frac{\gnorm{f_{A_0}}^p}{\|f_{A_0}\|_p^p} \leq \frac { \E^p f }{ \|f\|_p^p } = \affineeigenvalue.
\]	
\end{proof}

\begin{proof}[Proof of Theorem \ref{thm_equalitycase}]

For the proof of (a), assume first that $\domain = \B$.
		Arguing with the spherically decreasing rearrangement (see Proposition \ref{thm_affinePZ} of the section 2), it follows that $\affineeigenvalue[\B]$ admits a radial eigenfunction $f_p \in \W[\B]$. Then, the radial symmetry yields
		
\[
\affineeigenvalue[\B] = \frac{\E^p f_p}{\|f_p\|^p_p} = \frac{\|\nabla f_p\|^p_p}{\|f_p\|^p_p} \geq \classicaleigenvalue[\B],
\]
so that $\affineeigenvalue[\B] = \classicaleigenvalue[\B]$.
		
Conversely, assume equality in (a) for some open subset $\domain \subset \R^n$.
		Let $f_p \in \W \cap C^{1,\alpha}(\domain)$ be a positive eigenfunction of $\Delta_p$ corresponding to $\classicaleigenvalue$. Using the variational characterization of eigenvalues via minimization and the assumption $\affineeigenvalue = \classicaleigenvalue$, one easily concludes that $f_p \in \W \subset W^{1,p}(\Rn)$ satisfies $\E f_p = \gnorm{f_p}$. But the latter implies that $f_p$ is radial on $\R^n$. Since $f_p$ is positive in $\domain$, it follows that $\domain$ is a ball centered at the origin.

For the proof of (b), notice that $\classicaleigenone$ is equal to the Cheeger constant of $\domain$
	\[h_1(\domain) = \inf_{C \subseteq \domain} \frac {S(C)}{\vol(C)}.\]
Take $f$ to be any minimizer of $\classicaleigenone$.
	From the classical theory of Cheeger sets, we know (see \cite[Theorem 8]{kawohl2003isoperimetric}) that $f$ can be taken to be the characteristic function of a so-called Cheeger set $K \subseteq \domain$ of finite perimeter.
	Since
	\[
		\classicaleigenone = \frac{\gnorm[1] f}{\|f\|_1} \geq \frac{\E[1]f}{\|f\|_1} \geq \affineeigenone,
	\]
        we have that $f$ must be a radial function and thus $K$ must be a ball.

	It is known (see \cite[Remark 7]{kawohl2003isoperimetric}) that the mean curvature of the surface $\partial K$ at the interior points of $\domain$ must equal $\frac 1{n-1} \classicaleigenone$.
	But then a simple computation shows
	\[\classicaleigenone = \frac{S(\partial K)}{\vol(K)} = \frac nr,\]
        where $r>0$ is the radius of $K$, whereas the mean curvature of a sphere of radius $r$ is $\frac 1r$.
	This implies that there are no points of $\partial K$ that are interior to $\domain$, and so $\partial K \subseteq \partial \domain$.
\end{proof}

\begin{proof}[Proof of Theorem \ref{thm_affineCheegerSet}]
	For any $f \in BV(\domain)$, denote $K_t = \{x \in \R^n \suchthat f(x) \geq t\}$.
	We recall that
	\[\E[1]\chi_{K_t} = \left(c_{n,1} \vol(\Pi^\circ K_t)\right)^{-1/n}.\]
	By the co-area formula and Minkowski's integral inequality, we derive
	\begin{align}
		\E[1]f
		&\geq \left( c_{n,1} \int_{\S} \left( \int_0^\infty h_{\Pi K_t}(\xi) dt \right)^{-n} d\xi \right)^{-1/n}\\
		&\geq \int_0^\infty \left( c_{n,1} \int_{\S} h_{\Pi K_t}(\xi)^{-n} d\xi \right)^{-1/n} dt  \\
		&\geq \int_0^\infty \E[1]\chi_{K_t} dt.\\
	\end{align}
	
	Now if $f$ is a minimizer of $\affineeigenone$,
	\begin{align}
		0 &= \E[1]f - \affineeigenone \|f\|_1\\
		&\geq \int_0^\infty ( \E[1]\chi_{K_t} - \affineeigenone \vol(K_t)) dt\\
		&\geq 0.\\
	\end{align}
	Then, for almost every $t \in (0, \sup f)$, the function $\chi_{K_t}$ is a minimizer of $\affineeigenone$.
	It is clear that any of these $K_t$ satisfies the statement of the theorem.

	For the last statement, given $A \in \gl$ it suffices to note that (with the notation of \eqref{def_affineeigenvalue}),
	\[ R^{{\mathcal A}}_1(\chi_{AK}) = \det(A)^{-\frac 1n} R^{{\mathcal A}}_1(\chi_K). \]
\end{proof}

\section{Open problems}
\label{sec_openproblems}

Below we present some issues closely related to our results that we consider to be of great relevance and that could further deepen the understanding of the topics addressed in this work.

Let $\domain \subseteq \Rn$ be a bounded open set, $n \geq 2$ and $p \geq 1$.

\begin{enumerate}
\item We highlight some first issues related to the spectral theory satisfied by the operator $\affplap$ for $p > 1$.

\begin{itemize}
		\item[$\bullet$] Is it possible to characterize all eigenfunctions corresponding to $\affineeigenvalue$? Is the first affine eigenvalue simple?

        \item[$\bullet$] Is $\affineeigenvalue$ the unique eigenvalue which admits positive eigenfunction?

        \item[$\bullet$] Does $\affplap$ have any spectral gap?
\end{itemize}

These questions are by far not trivial even for balls where the first affine and classical eigenvalues coincide.

\item By Theorem \ref{thm_reversezhang}, there exists a best constant $A_{n,p}(\domain)$ such that, for any $f :\Rn \to \R$ a $C^1$ function with support in $\domain$,
	
\[
\E f \geq A_{n,p}(\domain) \|f\|_p^{\frac {n-1}{n}} \gnorm{f }^{1/n}.
\]
What is the value of $A_{n,p}(\domain)$? Is there any extremal function associated to $A_{n,p}(\domain)$? If so, is it smooth for $p > 1$?

\item The above inequality readily implies that

\[
\affineeigenvalue \geq A_{n,p}(\domain)^p \classicaleigenvalue^{1/n}.
\]
Let then $B_{n,p}(\domain)$ be the best constant associated to this equality, this is

\[
\affineeigenvalue \geq B_{n,p}(\domain) \classicaleigenvalue^{1/n}.
\]
Clearly, $B_{n,p}(\domain) \geq A_{n,p}(\domain)^p$. Does it occur equality at least for balls? Is the ball $\B$ an extremal domain for the above inequality? If so, the value of $B_{n,p}(\B)$ would be $\classicaleigenvalue[\B]^{(n-1)/n}$. What are all extremal domains for the above inequality?

	\item
		It is known that for the case $p = 1$, the classical eigenfunctions are characteristic functions of the so-called Cheeger sets of $\domain$.
		Theorem \ref{thm_affineCheegerSet} suggests that the theory of Cheeger sets could be developed in the affine case.
		For example: If $f$ is smooth, one can interpret $\Delta^1 f (x)$ as the mean curvature of the level set of $f$, and this leads to the characterization of the boundary structure of the Cheeger sets near the regular points.
		
	\begin{itemize}
		\item[$\bullet$] Is there a similar geometrical interpretation of $\affplap[1] f$?
		\item[$\bullet$] Are the affine Cheeger sets convex when $\domain$ is convex?
		\item[$\bullet$] If so, what other properties hold for the ``affine Cheeger sets'' in $\domain$?
	\end{itemize}

	\item
		We emphasize that, unlike the classical Cheeger sets, the affine Cheeger sets are in position of maximal volume.
		The following questions arise naturally:
		\begin{itemize}
			\item[$\bullet$] Is there a modification of the variational problem \eqref{def_affineCheegerConstant} whose minimizer sets are ellipsoids?
			\item[$\bullet$] Is there a characterization of spectral type (in term of eigenvalues of some operators) of John's position?
		\end{itemize}

	\item
		We ask whether inequality \eqref{ineq_reversezhang} can be improved to
		\[\E f \geq \constRZ \|f\|_q^{\frac {n-1}{n}} \gnorm{f }^{1/n}\]
		for some parameter $q > p$.

		A positive answer would allow to prove existence of minimizers and compactness to mixed variational problems.
		Some work has been done in \cite{schindler2018compactness} in this direction for the case $p = 2$ and $2 < q < n$, suggesting that the previous inequality could be valid.
	
	\item As mentioned in the introduction, one can see that $\affineeigenvalue$ is bounded from above when $\Omega$ ranges over all convex sets of a fixed volume.
		We ask whether the maximizers of $\affineeigenvalue$ can be identified.
\end{enumerate}

\noindent {\bf Acknowledgments:}
The first author was partially supported by CNPq (PQ 301203/2017-2) and Fapemig (APQ-01454-15). The second author was partially supported by CNPq (PQ 307471/2019-5 e Universal 428076/2018-1), FAPERJ (JCNE 236508). The third author was partially supported by CNPq (PQ 302670/2019-0, Universal 429870/2018-3) and Fapemig (PPM-00561-18).

\bibliographystyle{plain}
\bibliography{ref}

\end{document}